\def\today{\ifcase\month\or
  January\or February\or March\or April\or May\or June\or
  July\or August\or September\or October\or November\or December\fi
  \space\number\day, \number\year}
\DeclareMathOperator{\sgn}{\mathrm{sgn}}
 \newtheorem{theorem}{Theorem}
  \newtheorem{conjecture}[theorem]{Conjecture}
 \newtheorem{lemma}[theorem]{Lemma}
 \newtheorem{proposition}[theorem]{Proposition}
 \newtheorem{corollary}[theorem]{Corollary}
  \newtheorem{question}[theorem]{Question}
 \theoremstyle{definition}
 \theoremstyle{remark}
 \newcommand{\ft}{\widehat}
 \newcommand{\mc}{\mathcal}
 \newcommand{\A}{\mc{A}}
 \newcommand{\F}{\mc{F}}
 \newcommand{\I}{\mc{I}}
 \newcommand{\LL}{\mc{L}}
 \newcommand{\C}{\mathbb{C}}
 \newcommand{\R}{\mathbb{R}}
  \newcommand{\Rn}{\mathbb{R}^N}
 \newcommand{\Z}{\mathbb{Z}}
 \newcommand{\tF}{\widehat{F}}
  \newcommand{\bu}{\boldsymbol{u}}
  \newcommand{\bn}{\boldsymbol{n}}
  \newcommand{\bm}{\boldsymbol{m}}
 \newcommand{\bx}{\boldsymbol{x}}
  \newcommand{\bl}{\boldsymbol{\ell}}
  \newcommand{\by}{\boldsymbol{y}}
          \newcommand{\bv}{\boldsymbol{v}}
   \newcommand{\bt}{\boldsymbol{t}}
\newcommand{\es}[1]{\begin{align}#1\end{align}}
\newcommand{\est}[1]{\begin{align*}#1\end{align*}}
\newcommand{\wt}{\widetilde}
\newcommand{\ep}{\varepsilon}
\newcommand{\bo}{\boldsymbol}
\newcommand{\bxi}{\boldsymbol{\xi}}
\newcommand{\dint}{\displaystyle\int}
\newcommand{\dprod}{\displaystyle\prod}
\newcommand{\dsum}{\displaystyle\sum}
\newcommand{\ra}{\rightarrow}
\date{\today}
\begin{document}

\title[The Box-Minorant Problem]{The Beurling-Selberg Box Minorant Problem via Linear Programming Bounds}
\author[Carruth]{Jacob Carruth}
\address{Univ. of Texas, 1 University Sta. Austin, TX 78712}
\email{jcarruth@math.utexas.edu}
\author[Elkies]{Noam Elkies}
\address{Harvard University, Cambridge, MA 02138}
\email{elkies@math.harvard.edu}
\author[Gon\c{c}alves]{Felipe Gon\c{c}alves}
\address{Hausdorff Center for Mathematics, Universit\"at Bonn, Endenicher Allee 60, 53115 Bonn, Germany}
\email{goncalve@math.uni-bonn.de}
\author[Kelly]{Michael Kelly}
\address{Center for Communications Research, 805 Bunn Dr. Princeton, NJ 08540}
\email{mskelly@idaccr.org}
%
\allowdisplaybreaks
\numberwithin{equation}{section}

\begin{abstract}
In this paper we investigate a high dimensional version of Selberg's minorant problem for the indicator function of an interval. In particular, we study the corresponding problem of minorizing the indicator function of the box $Q_{N}=[-1,1]^N$ by a function whose Fourier transform is supported in the same box $Q_N$. We show that when the dimension is sufficiently large there are no minorants with positive mass and we give an explicit lower bound for such dimension. On the other hand, we explicitly construct minorants for dimensions $1,2,3,4$ and $5$ and, as an application, we use them to produce an improved diophantine inequality for exponential sums.
\end{abstract}
 
\maketitle

\section{Introduction}
Let $Q_N=[-1,1]^N$, let ${\bo 1}_{Q_{N}}$ be the indicator function of $Q_{N}$, and let $\delta>0$. A fundamental question in approximation theory asks:

\begin{question}\label{mainQuestion} 
Does there exist a function $F:\R^N\to \R$ such that:
\begin{enumerate}[$(i)$]
\item $F(\bx)\leq {\bf 1}_{Q_{N}}(\bx)$ for all $\bx\in\R^N$;
\item the Fourier transform of $F(\bx)$ is supported in the box $[-\delta,\delta]^N$;
\item $\int_{\R^N}  F(\bx)d\bx>0$?
\end{enumerate}
\end{question}

Note that condition $(iii)$ is a natural one, it only imposes that $F(\bx)$ does a better job  than the trivial minorant $F\equiv 0$. Basic considerations will lead the reader to surmise that the existence of such a function depends on the size of $\delta$.  If $\delta$ is very large, then such a function will surely exist. On the other hand, if $\delta$ is very small, then no such function ought to exist. When  $N=1$ the above question was settled by Selberg \cite{S,V} who showed that there is a positive answer to Question \ref{mainQuestion} if and only if $\delta>\tfrac{1}{2}$. From here it is not difficult to show that when $N>1$, Question \ref{mainQuestion} has a negative answer whenever $\delta\leq\tfrac{1}{2}$ (see Lemma \ref{slice}). When $N$ is large it is unknown how small $\delta$ may be for Question \ref{mainQuestion} to admit a positive answer. The best result in this direction is due to Selberg who proved that when $N>1$ and $\delta>N-\tfrac{1}{2}$, then Question \ref{mainQuestion} has a positive answer. Selberg never published his construction, but he did communicate it to Vaaler and Mongtomery (personal communication). His construction has since appeared several times in the literature, see for instance \cite{Harman1993,Harman1998, HKW}. More details about Selberg's (and also Montgomery's) construction can be found in the Appendix.
\smallskip

The following is the main theorem of this paper.

\begin{theorem}
 If $N>717$ and $\delta= 1$ then Question \ref{mainQuestion} has a negative answer. In contrast, Question \ref{mainQuestion} has a positive answer for $\delta=1$ in dimension $N=1,2,3,4,5$. 
\end{theorem}

The proof of this, as well as and our other main results, are based in a detailed analysis of the following extremal problem and a novel technique to bound the objective of this infinite dimensional linear program by the objective of a finite dimensional linear program (Theorems \ref{nuLPbound} and \ref{DeltaLPbound}).

\begin{question}\label{prob2} 
For every integer $N\geq 1$ determine the value of the quantity
\begin{equation}\label{nu}
		\nu(N)=\sup\dint_{\Rn}F(\bx)d\bx,
	\end{equation}
where the supremum is taken over functions $F\in L^1(\R^N)$ such that:

\begin{center}
\begin{enumerate}[$(I)$]
\item $\ft F(\bxi)$ is supported in $Q_N$;
\item $F(\bx)\leq {\bo 1}_{Q_{N}}(\bx)$ for (almost) every $\bx\in\Rn$.
\end{enumerate}
\end{center}
\end{question}

We show that the admissible minorants are given by a Whittaker-Shannon type interpolation formula and we use this formula to demonstrate that the {only admissible minorant with non-negative integral that interpolates the indicator function ${\bo 1}_{Q_N}(\bx)$ at the integer lattice $\Z^N\setminus\{{\bo 0}\}$ is the { identically zero function}. We also define an auxiliary quantity $\Delta(N)$ in \eqref{delta-quantity}, similar to $\nu(N)$, and derive a functional inequality, which ultimately implies that $\nu(N)$ {vanishes} for finite $N$.}
\smallskip

As we have remarked,  when $N=1$ Question \ref{mainQuestion} is settled completely. In fact, even the corresponding { extremal} problem is completely settled (in higher dimensions no extremal results for the box minorant problem are known). Suppose $\I$ is an interval in $\R$ of finite length, ${\bo 1}_{\I}(x)$ is the indicator of $x$, and  $\delta>0$.  Selberg \cite{S,V} introduced functions $C(x)$ and $c(x)$ with the following properties:
\smallskip
	\begin{enumerate}[$(i)$]
		\item $\ft{C}(\xi)=\ft{c}(\xi)=0$ if $|\xi|>\delta$ (where $ \ \ft{\nonumber} \ $ denotes the Fourier transform);
		\item $c(x)\leq {\bo 1}_{\I}(x)\leq C(x)$ for each $x\in\R,$;
		\item $\int_{-\infty}^{\infty} (C(x)-{\bo 1}_{\I}(x))dx =\int_{-\infty}^{\infty}({\bo 1}_{\I}(x)-c(x))dx =\delta^{-1}$.
	\end{enumerate}
	\smallskip
Furthermore, among all functions that satisfy $(i)$ and $(ii)$ above, Selberg's functions minimize the integrals appearing in $(iii)$ if and only if $\delta\,\mathrm{length}(\I)\in\Z$. If  $\delta\,\mathrm{length}(\I)\not\in\Z$, then the extremal functions have been found by Littmann in \cite{Littquad}. 
\smallskip

Originally, Selberg was motivated to construct his one dimensional extremal functions to prove a sharp form of the large sieve. His functions and their generalizations have since become part of the standard arsenal in analytic number theory and have a number of applications in fields ranging from probability, dynamical systems, optics, combinatorics, sampling theory, and beyond. For a non-exhaustive list see
 \cite{MR2739041,MR3063902,MR3384872,MR3343896,MR3078273,MR3110588,MR3209354,MR3042593,MR2661497,MR2581230,MR2781205,MR2331578,MR1722198,MR0466048} and the references therein. 
\smallskip
	
In recent years higher dimensional analogues of Selberg's extremal function and related constructions have proven to be important in the recent studies of Diophantine inequalities \cite{BMV,MR947641,HKW,HV}, visibility problems and {quasicrystals \cite{Adiceam, HKW}}, and sphere packings\footnote{The extremal problems considered for sphere packings differ from the problems that we consider here. Instead of the admissible functions being band-limited, their Fourier transforms are only required to be non-negative.}
\cite{2016arXiv160306518C,cohn2002new,cohn2003new,cohn2009optimality,cohn2014sphere,2016arXiv160304246V}. See also \cite{MR3193963} for related constructions recently used in signal processing. Since Selberg's original construction of his box minorants there has been some progress on the Beurling-Selberg problem in higher dimensions \cite{BMV,BK,CL3,CL4,GKM2014}. In particular, in \cite{HV} Holt and Vaaler initiate the study of a variant of Question \ref{mainQuestion} in which the boxes are replaced by Euclidean balls. They are actually able to establish extremal results in some cases. A complete solution to Question \ref{mainQuestion} for balls can be found in \cite{G}. There seems to be a consensus among experts that despite four decades of progress on Beurling-Selberg problems, box minorants are poorly understood. This sentiment was recently raised in \cite{MR3479169}. We hope that the contributions of this paper will help reveal why the box minorant problem is so difficult and move us closer to understanding these enigmatic objects.

\section{Main Results}
In this section we give some definitions and state the main results of the present article. A function $F(\bx)$ satisfying conditions (I) and (II) of Question \ref{prob2} will be called { admissible for} $\nu(N)$ (or $\nu(N)$-admissible) and if it achieves equality in \eqref{nu}, then it is said to be {extremal}.

An indispensable tool in our investigation is the Poisson summation formula. If $G:\R^N \to \R$ is ``sufficiently nice'' (see \cite{SW} for a precise statement of when the formula holds), $\Lambda$ is a full rank lattice in $\R^N$ of covolume $|\Lambda|$, and $\Lambda^{*}$ is the corresponding dual lattice\footnote{That is, $\Lambda^{*}=\{ {\bo u}\in \R^N : {\bo u}\cdot \boldsymbol{\lambda}\in \Z \text{ for all }\boldsymbol{\lambda}\in \Lambda  \}$.}, then the Poisson summation formula is the assertion that 
\begin{equation} \label{poisson}
   \sum_{\boldsymbol{\lambda}\in \Lambda} G({\bo x}+\lambda) = \frac{1}{|\Lambda|}\sum_{{\bo u}\in\Lambda^*} \ft{G}({\bo
   u})e^{2\pi i {\bo u}\cdot {\bo x}},
\end{equation}
for every ${\bo x}\in \R^N$.

If $F(\bx)$ is a $\nu(N)$-admissible function, then it follows from Proposition \ref{prop:Poisson} that the Poisson summation formula may be applied to $F({\bo x})$. That is, $\nu(N)$-admissible functions are ``sufficiently nice.'' Thus, upon applying Poisson summation  \eqref{poisson-sum} to $F({\bo x})$ we find that
	\[
		\tF({\bo 0})= \dsum_{\bn\in\Z^N} \tF(\bn) = \dsum_{\bn\in\Z^N} F(\bn) \leq F({\bo 0}).
	\]
Thus we have the fundamental inequality 
	\begin{equation}\label{fundamental}
		\tF({\bo 0}) \leq F({\bo 0}).
	\end{equation}
Evidently there is equality in \eqref{fundamental} if, and only if, $F(\bn)=0$ for each non-zero $\bn\in\Z^N$. If $N=1$ then, by using the interpolation formula \eqref{int-form-gen}, Selberg was able to show (see \cite{S,V}) that $\nu(1)=1$ and that
	\begin{equation*}
		\dfrac{\sin^{2}\pi x}{(\pi x)^{2}(1-x^2)}
	\end{equation*}
is an extremal function (this is not the unique extremal function). We also note that the Fourier transform of the above function is non-negative, supported in $|x|\leq 1$ and equal to
$$
1-|x|+\frac{\sin(2\pi |x|)}{2\pi}.
$$
Therefore, Selberg's function is also extremal for the Cohn and Elkies \cite{cohn2003new} linear programming bounds for sphere packings in dimension $1$ (again not unique).

A more refined version of the inequality \eqref{fundamental} can be obtained by taking a weighted average of the Poisson summation formula on grids. More precisely, suppose that $\Lambda\subset \R^N$  is a full-rank lattice,  $y_{1},...,y_{L}\in \R^N$, and $\omega_{1},...,\omega_{L}\geq 0$. By repeatedly applying \eqref{poisson} and interchanging the order of summation, we find that
\begin{equation}\label{weightedPoisson}
\sum_{\ell=1}^{L}\omega_{\ell}\sum_{\bo \lambda\in\Lambda}F(\bo \lambda+\by_{\ell}) = \frac{1}{|\Lambda|}\sum_{\bu\in\Lambda^*}\ft {F}(\bu)\sum_{\ell=1}^{L}\omega_{\ell}e^{-2\pi i \bu\cdot \by_{\ell}}.
\end{equation}

\noindent Suppose that 
\begin{equation}\label{lp1}
\sum_{\ell=1}^{L}\omega_{\ell}e^{-2\pi i \bu\cdot \by_{\ell}} = 0, \ \;\text{if } \bo u \in \Lambda^* \cap Q_N\setminus \{\bo 0\},
\end{equation}
and 
\begin{equation}\label{lp2}
\sum_{\ell=1}^{L}\omega_{\ell}=|\Lambda|.
\end{equation}
If $F(\bx)$ is $\nu(N)$-admissible, then \eqref{weightedPoisson} yields the following strengthening of \eqref{fundamental}:

\begin{equation}\label{fundamentalLP}
   \ft{F}({\bo 0})\leq \underset{\|\bo\lambda+\by_{\ell}\|_\infty<1}{\sum_{\ell=1}^{L}\sum_{\bo \lambda\in \Lambda}}\omega_{\ell}.
\end{equation}
Since the right hand side of \eqref{fundamentalLP} is a finite sum that is linear in $\omega_{1},...,\omega_{L}$, we have the following { finite dimensional} linear programming bounds for $\nu(N)$.

\begin{theorem}\label{nuLPbound}
 Suppose $\bo y_{1},...,\bo y_{L}\in \R^N$ and that $\Lambda$ is a full rank lattice in $\R^N$ of covolume $|\Lambda|$. Then
\[
   \nu(N) \leq \min \underset{\|\bo \lambda+\bo y_{\ell}\|_\infty<1}{\sum_{\ell=1}^{L}\sum_{\lambda\in \Lambda}}\omega_{\ell}
\]
where the minimum is taken over $\omega_{1},...,\omega_{L}\geq 0$ satisfying \eqref{lp1} and \eqref{lp2}.
\end{theorem}

We believe that the above result should actually give the optimal answer.
\begin{conjecture}
Assume $\nu(N)>0$. Then for any $\ep>0$ there exists a full rank lattice $\Lambda\subset \R^N$, vectors $\bo y_{1},...,\bo y_{L}\in \R^N$ and numbers $\omega_{1},...,\omega_{L}\geq 0$ satisfying \eqref{lp1} and \eqref{lp2} such that
$$
\nu(N)+\ep > \underset{\|\bo \lambda+\bo y_{\ell}\|_\infty<1}{\sum_{\ell=1}^{L}\sum_{\lambda\in \Lambda}}\omega_{\ell}.
$$
\end{conjecture}
The following theorem compiles some of the basic properties related to the quantity $\nu(N)$, establishing: $(1)$ that extremizers for the quantity $\nu(N)$ do exist, $(2)$ that $\nu(N)$ is a decreasing function of $N$ and, most curiously, $(3)$ that $\nu(N)$ { vanishes for finite $N$}.

\begin{theorem}\label{nuTheorem}
The following statements hold.
\begin{enumerate}[$(i)$]
\item For every $N\geq 2$ there exists a $\nu(N)$-admissible function $F(\bx)$ such that 
$$
\nu(N)=\int_{\R^N} F(\bx)d\bx.
$$
\item If $\nu(N)>0$ then $\nu(N+1) < \nu(N)$. In particular, $\nu(2)<1$.
\item There exists a critical dimension $N_c$ such that $\nu(N_c)>0$ and $\nu(N)=0$ for all $N> N_c$. Moreover,
\est{
5\leq N_c \leq \bigg\lfloor\frac{k}{1-\Delta(k)}\bigg\rfloor.
}
for any $k\leq N_c$.
\end{enumerate}
\end{theorem}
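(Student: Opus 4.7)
The theorem decomposes naturally into three parts; I outline my strategy for each.

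For part (i), I would use a standard weak compactness argument. Any $\nu(N)$-admissible $F$ with $\int F\geq 0$ satisfies uniform $L^1$ and $L^\infty$ bounds: writing $F=F_+-F_-$, the constraint $F_+\leq \mathbf{1}_{Q_N}$ gives $\int F_+\leq 2^N$ and hence $\int F_-\leq 2^N$, while Paley--Wiener (using that $\widehat F$ is supported in the fixed compact $Q_N$) converts the $L^1$ bound on $\widehat F$ into a pointwise $L^\infty$ bound on $F$. Given a maximizing sequence $\{F_k\}$, I would extract a subsequence along which $\widehat{F_k}$ converges weakly as bounded measures on $Q_N$, define $F$ via Fourier inversion of the limit measure, and verify that the minorant condition passes to the limit using equicontinuity from the entire-function estimates; weak convergence applied to the constant $1$ on $Q_N$ then yields $\int F=\nu(N)$.

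For part (ii), I would slice. Let $F$ be $\nu(N+1)$-extremal and set $G_t(\bx):=F(\bx,t)$ for each $t\in\R$. The partial Fourier transform $\widehat{G_t}(\bxi)=\int_{\R}\widehat F(\bxi,\xi)e^{2\pi it\xi}\,d\xi$ is supported in $Q_N$, and since $F\leq \mathbf{1}_{Q_{N+1}}$ one has $G_t\leq \mathbf{1}_{Q_N}$ for $|t|\leq 1$ and $G_t\leq 0$ for $|t|>1$, so $G_t$ is $\nu(N)$-admissible for every $t$. Putting $h(t):=\int_{\R^N}G_t(\bx)\,d\bx$, one checks $\widehat h$ is supported in $[-1,1]$, $h(t)\leq\nu(N)$ for all $t$, $h(t)\leq 0$ for $|t|>1$, and $\int_\R h=\nu(N+1)$. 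Hence $h/\nu(N)$ is $\nu(1)$-admissible and
\est{
\nu(N+1)=\int_\R h(t)\,dt\leq \nu(N)\cdot\nu(1)=\nu(N).
}
For strict inequality, assume equality; then $h/\nu(N)$ is $\nu(1)$-extremal and Selberg's one-dimensional rigidity forces $h(n)=0$ for every $n\in\Z\setminus\{0\}$. Since $G_n\leq 0$ pointwise for $|n|\geq 2$, the equality $\int G_n=0$ then yields $G_n\equiv 0$, i.e.\ $F(\bx,n)=0$ a.e.\ in $\bx$. Slicing in each of the remaining $N$ coordinates produces the same conclusion by symmetry, so $F$ vanishes on the hyperplanes $\{x_j=n\}$ for all $j$ and all integers $|n|\geq 2$. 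Substituting this information into the Whittaker--Shannon type interpolation formula for $\nu(N+1)$-admissible functions (established earlier in the paper) collapses $F$ onto a finite combination of the remaining slices, and combining with the pointwise bound $F\leq \mathbf{1}_{Q_{N+1}}$ and $\int F=\nu(N)>0$ yields the desired contradiction.

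For part (iii), the lower bound $N_c\geq 5$ is obtained by exhibiting explicit nontrivial $\nu(N)$-admissible minorants for each $N\leq 5$, as promised in the abstract; these are built from suitable tensor and interpolation constructions starting from Selberg's one-dimensional extremizer. The upper bound uses the functional inequality involving $\Delta(2)$ described in the introduction: iterating the relation between $\nu(N)$ and $\Delta(2)$ starting from $\nu(2)\leq 1$ (guaranteed by part (ii)) produces a bound of the shape $\nu(N)\leq 2-N(1-\Delta(2))$, which becomes nonpositive once $N(1-\Delta(2))\geq 2$. Since the identically zero function is always admissible, $\nu(N)\geq 0$, and therefore $\nu(N)=0$ for every $N>\lfloor 2/(1-\Delta(2))\rfloor$, giving the stated upper bound on $N_c$. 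The main obstacle in the whole argument is the rigidity step inside part (ii): the slicing trivially gives $\leq$, and excluding the equality case requires a careful interplay between Selberg's one-dimensional uniqueness, the symmetry across coordinates, and the interpolation formula.
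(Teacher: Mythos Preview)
Your plan for (i) matches the paper's (it invokes Lemma~\ref{sequenceLemma}, which is exactly the compactness argument you describe), and your plan for (iii) is correct in outline: the paper deduces the upper bound from Theorem~\ref{Deltathm}(iv), whose proof establishes that $(1-\Delta(N))/N$ is nondecreasing in $N$, yielding $\Delta(N)\leq 1-\tfrac{N}{2}(1-\Delta(2))$ and hence $N_c\leq \lfloor 2/(1-\Delta(2))\rfloor$; the lower bound comes from the explicit constructions in Section~\ref{conMin}.

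For (ii), your route via $h(t)=\int_{\R^N}G_t$ and the admissibility of $h/\nu(N)$ for $\nu(1)$ is a pleasant variant of the paper's direct Poisson argument, and both correctly reduce the strict inequality to the same rigidity statement: if $F$ is $\nu(N+1)$-admissible with $F(\mathbf 0)\geq 0$ and $F(\bn)=0$ for every nonzero $\bn\in\Z^{N+1}$, then $F\equiv 0$. But there is a genuine gap in how you reach and use this. First, you only claim $G_n\leq 0$ for $|n|\geq 2$; you are missing the continuity observation that $F\leq 0$ on $\partial Q_{N+1}$, which gives $G_{\pm 1}\leq 0$ as well, so that $h(\pm 1)=0$ forces $G_{\pm 1}\equiv 0$ too. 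Without this, the interpolation formula still carries nontrivial data at all of $\{-1,0,1\}^{N+1}$, and no contradiction is available --- indeed the paper's quantity $\lambda(N)$ shows minorants of exactly that shape with positive integral exist for $N\leq 5$. Second, and more seriously, even after you know $F(\bn)=0$ for all nonzero $\bn$, your claim that ``substituting into the interpolation formula \ldots\ yields the desired contradiction'' is precisely the content of Theorem~\ref{latticeInterpolateN}, which the paper calls its cornerstone and proves separately via a delicate analysis of the polynomial factor in the two-dimensional case (Theorem~\ref{latticeInterpolate}) followed by induction. The paper invokes that theorem at this point; you correctly flag the rigidity as the main obstacle but do not supply the argument, and it does not follow by the mechanism you sketch.
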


\noindent {\bf Remarks.} 
\begin{enumerate}[$(1)$]
\item Using Theorem \ref{DeltaLPbound} we were able to show that $\Delta(2) < .997212$, yielding an upperbound $N_c\leq 717$ (see Table \ref{table:data}).
 \smallskip
 
\item The quantity $\Delta(k)$ appearing in the above theorem is defined in equation \eqref{delta-quantity}. It follows from Lemma \ref{Delta-dec-lemma} that $k\mapsto k/(1-\Delta(k))$ is non-increasing for $k\leq N_c$, and from Theorem \ref{Deltathm} that $\Delta(k)<1$ for all $k\geq 2$. Thus producing upper bounds for $\Delta(k)$ in higher dimensions will improve the critical dimension $N_c$, however the problem quickly becomes incredibly hard as the dimension increases, demanding a huge amount of computational time to deliver an upper bound strictly less than one. That is why we were only able to produce upper bounds up to dimension 5. Moreover, the above result can only be applied for $k\leq N_c$ and so far we do not known if $\nu(6)>0$, thus to use the upper bound derived above in a dimension higher than $5$, we have also to find a non-trivial minorant in such dimension.
\smallskip
 
\item To put this result in context, note that volume of $Q_N $ is growing exponentially, so there is a lot of volume on both the physical and frequency sides. However, every time another dimension gets added, more constraints also get added so it requires a detailed analysis to determine the behavior of $\nu(N)$. Poisson summation, which yields the non-intuitive bound $\nu(N)\leq 1$, already detects this tug-of-war.
\smallskip

\item Theorem \ref{nuTheorem} has some parallels in classical asymptotic geometric analysis, and mass concentration in particular. In our first attempts to prove Theorem \ref{nuTheorem} we tried to employ asymptotic geometric techniques to exploit properties of $Q_N$ but we were not able to uncover a proof. We found it awkward to incorporate the Fourier analytic and one-sided inequality constraints (i.e. (I) and (II) in the definition of $\nu(N)$) with the standard tool kit of asymptotic geometric analysis. It would be very interesting to see a proof of Theorem \ref{nuTheorem} based on such techniques. 
\end{enumerate}
\smallskip

Our next result shows that Selberg's $\Z^N$-interpolation strategy to build minorants fails in higher dimensions.

\begin{theorem}\label{latticeInterpolateN}
Let $N\geq 2$. Let $F(\bx)$ be an admissible function for $\nu(N)$ and assume that $F({\bo 0})\geq 0$. If $F(\bn)=0$ for every $\bn\in\Z^N\setminus\{\bo 0\}$, then $F(\bx)$ vanishes identically.
\end{theorem}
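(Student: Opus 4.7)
The plan is to reduce the problem to a tensor-product form via a slicing argument and then exploit a sign obstruction that is available only when $N\ge 2$.

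First I would prove, by induction on $M$, the auxiliary statement that any $g:\R^M\to\R$ bandlimited to $[-1,1]^M$, satisfying $g\le 0$ everywhere and vanishing on $\Z^M$, is identically zero. In the base case $M=1$, the sign condition together with $g(n)=0$ makes each integer a local maximum, so $g'(n)=0$ for all $n$; the Vaaler formula \eqref{int-form-gen} then collapses to give $g\equiv 0$. The inductive step slices $g$ along integer values of the last coordinate, applies the $(M-1)$-dimensional case to each slice, and then applies the base case on the resulting one-dimensional vertical slices. I would then apply this lemma to $F(\bx',m)$ for each coordinate and each integer $m$ with $|m|\ge 2$: such a slice lies entirely in $Q_N^c$ (hence is $\le 0$) and vanishes on $\Z^{N-1}$ by hypothesis. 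It follows that $F$ vanishes on every hyperplane $\{x_j=m\}$ with $|m|\ge 2$, and, because $F\le 0$ on a full neighborhood of each such hyperplane, the transverse first derivative vanishes as well, so $F$ (viewed as an entire function on $\C^N$) has a zero of order at least $2$ along each such hyperplane.

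Next I would factor. With $H(t):=\sin\pi t/(\pi t(1-t^2))$, which is entire of exponential type $\pi$ with simple zeros exactly at the integers $|t|\ge 2$ and values $H(0)=1$, $H(\pm 1)=\tfrac12$, I would write $F(\bz)=\prod_{j=1}^N H(z_j)\,F_2(\bz)$. A Phragm\'en--Lindel\"of/Paley--Wiener argument, applied coordinate by coordinate along the imaginary axis, shows that $F_2$ is entire of exponential type $\pi$ in each variable, i.e., bandlimited to $[-1/2,1/2]^N$. Its samples at $\Z^N$ are $F_2({\bf 0})=F({\bf 0})$ and $F_2(\bn)=0$ for $\bn\neq {\bf 0}$: when all $|n_j|\le 1$ this is immediate from $F(\bn)=0$ and $H(n_j)\ne 0$, and when some $|n_j|\ge 2$ the order-$\ge 2$ vanishing of $F$ in the $z_j$-direction overpowers the simple zero of $H(z_j)$. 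Shannon--Whittaker interpolation at the Nyquist lattice $\Z^N$ then collapses to the single surviving term, giving
\[
F(\bx)=F({\bf 0})\prod_{j=1}^N f(x_j),\qquad f(t):=\tfrac{\sin^2\pi t}{\pi^2 t^2(1-t^2)}.
\]
The function $f$ satisfies $f\ge 0$ on $[-1,1]$ and $f\le 0$ outside, with $f(3/2)=-16/(45\pi^2)<0$. Choosing $\bx_0=(\tfrac32,\tfrac32,0,\dots,0)$, which is where the hypothesis $N\ge 2$ is used, one has $\bx_0\notin Q_N$, so admissibility forces $F(\bx_0)\le 0$; but $F(\bx_0)=F({\bf 0})\,f(3/2)^2$ with $f(3/2)^2>0$, forcing $F({\bf 0})\le 0$. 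Combined with $F({\bf 0})\ge 0$ this yields $F({\bf 0})=0$, and therefore $F\equiv 0$.

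The main technical obstacle will be the factorization step: verifying rigorously that $F/\prod H(z_j)$ is entire (order-of-vanishing bookkeeping at the intersections of the hyperplanes $\{x_j=m\}$, using coprimality of the corresponding prime ideals) and that its exponential type genuinely drops to $\pi$ in each variable, in a sense strong enough for the Shannon interpolation to be legitimate --- on the real axis $1/H$ grows cubically, so $F_2$ need not be in $L^2$. A cleaner, essentially equivalent route that sidesteps this bandwidth-drop issue is to apply the tensor-product Vaaler formula for bandwidth $[-1,1]^N$ directly to $F$: all values and mixed first-order partial derivatives of $F$ at $\bn$ with $\|\bn\|_\infty\ge 2$ vanish by the slicing step, and one recognizes the remaining finite sum as exactly $F({\bf 0})\prod_j f(x_j)$. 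The sign obstruction in the final step is precisely what is absent when $N=1$, consistent with the existence of nontrivial one-dimensional Selberg minorants vanishing on $\Z\setminus\{0\}$.
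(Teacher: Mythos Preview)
Your overall strategy---reduce $F$ via an interpolation formula to a small finite-parameter family and then find a sign obstruction---is exactly the paper's, and your auxiliary lemma (non-positive bandlimited functions vanishing on $\Z^M$ are identically zero) and the slicing step for $\{x_j=m\}$ with $|m|\ge 2$ are correct. The argument breaks down, however, at the reduction $F(\bx)=F({\bf 0})\prod_j f(x_j)$, and this is a genuine gap, not merely a technicality.

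In Route~1 the issue is not only that $F_2=F/\prod_j H(z_j)$ may fail to lie in $L^2$; even if one proves $F_2$ is bounded on $\R^N$ (which is doable with uniform second-derivative estimates), an entire function of exponential type $\pi$ in each variable that vanishes on $\Z^N$ need not be zero---$\sin\pi x_1$ is a counterexample---so the Shannon reconstruction from values at $\Z^N$ alone is simply not valid in this class. In Route~2, the Vaaler formula \eqref{int-form-gen} requires, at each $\bn\in\{-1,0,1\}^N$, not just $F(\bn)$ but all $2^N$ mixed partials $\partial_{\bj}F(\bn)$ with $\bj\in\{0,1\}^N$. Your slicing argument says nothing about, e.g., $\partial_{x_1}F(1,0,\dots,0)$ or $\partial_{x_1 x_2}F(1,1,0,\dots,0)$, so the ``remaining finite sum'' does \emph{not} collapse to the single term $F({\bf 0})\prod_j f(x_j)$. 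In fact, pushing your own factorization one step further---dividing by $\prod_j H(z_j)^2$, which is legitimate since you have established order-$2$ vanishing along each $\{z_j=m\}$, $|m|\ge 2$---yields $F=P\cdot\prod_j H(z_j)^2$ with $P$ a polynomial of bounded degree in each variable; this is precisely the paper's representation \eqref{simple-form}. Already in the symmetric $N=2$ case this polynomial carries \emph{three} free parameters $(F(0,0),\,a=2\partial_x F(1,0),\,b=4\partial_{xy}F(1,1))$, and the paper needs the full minorant inequality at non-lattice points (the analysis of $B(x,\infty)$ and of level curves $(x^2-1)(y^2-1)=1/t$ in Step~2 of Theorem~\ref{latticeInterpolate}) to force $a=b=0$ and $F(0,0)=0$. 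Your sign obstruction at $(3/2,3/2,0,\dots,0)$ is a valid way to kill the single parameter $F({\bf 0})$ once the others are gone, but it cannot by itself eliminate the extra degrees of freedom that your argument has not yet constrained.
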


We are also interested in studying a ``scaled-out'' version of the $\nu(N)$-problem defined as follows. Let 
\es{\label{delta-quantity}
\Delta(N)=\sup_{F} \int_{\R^N} F(\bx)d\bx,
}
where the supremum if taken among functions $F(\bx)$ such that:
\begin{center}
\begin{enumerate}[(I)]
\item $\ft F(\bxi)$ is supported in $Q_N$;
\item $F(\bx)\leq 0$ for (almost) every $\bx\notin Q_N$;
\item $F(\bo 0)=1$;
\item $\ft F(\bo 0)> 0$.
\end{enumerate}
\end{center}
We have the following analogue of Theorem \ref{nuLPbound} for $\Delta(N)$.

\begin{theorem}\label{DeltaLPbound}
 Suppose $y_{1}={\bo 0}$,  $y_{2},...,y_{L}\in \R^N$ and that $\Lambda$ is a full rank lattice in $\R^N$ of covolume $|\Lambda|$. Then
\[
   \Delta(N) \leq \min \omega_{1}
\]
where the minimum is taken over $\omega_{1},...,\omega_{L}\geq 0$ satisfying \eqref{lp1}, \eqref{lp2}, and for $\ell=2,..,L$ 
\[
 \omega_{\ell}=0 \text{ if }\|\bo \lambda+y_{\ell}\|_\infty<1 \text{ for some } \bo\lambda\in\Lambda.
\]
\end{theorem}

The quantity $\Delta(N)$ may not be well defined for some $N$, in this case we define $\Delta(N)=0$. Lemma \ref{slice} shows that if $\Delta(N_0)$ is well-defined (that is $\Delta(N_0)>0$), then it is well defined for all $N\leq N_0$. One can also verify that $\Delta(N)>0$ if and only if $\nu(N)>0$ and 
$$
\nu(N)\leq \Delta(N).
$$ 
Thus, they vanish for the first time at the same dimension. Poisson summation shows that $\Delta(N)\leq 1$ for all $N$ and thus $\Delta(1)=1$. A priori, the existence of extremizers for the $\Delta(N)$ problem is not guaranteed since an extremizing sequence may blow-up inside the box $Q_N$. The next theorem shows that $\Delta(N)$ behaves similarly to $\nu(N)$ for $N\geq 2$.

\begin{theorem}\label{Deltathm}
The following statements hold.
\begin{enumerate}[$(i)$]
\item There exists a constant $B_N\geq 1$, depending only on $N$, such that if $F(\bx)$ is admissible for the $\Delta(N)$ problem then $F(\bx)\leq B_N$ for all $x\in Q_N$.
\item If $\Delta(N)>0$, then there exists a $\Delta(N)$-admissible function $F(\bx)$ such that 
$$
\Delta(N)=\int_{\R^N} F(\bx)d\bx.
$$
\item If $\Delta(N)>0$, then $\Delta(N+1) < \Delta(N)$. In particular, $\Delta(2)<1$.
\item There exists a critical dimension $N_c$ such that $\Delta(N_c)>0$ and $\Delta(N)=0$ for all $N>N_c$. Moreover, the same bound holds
\est{
5\leq N_c \leq \bigg\lfloor\frac{k}{1-\Delta(k)}\bigg\rfloor,
}
for any $k\leq N_c$.
\end{enumerate}
\end{theorem}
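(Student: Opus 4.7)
The plan is to mirror the strategy used for Theorem \ref{nuTheorem}, with modifications to handle the more delicate constraint (II) of (\ref{delta-quantity}) (pointwise nonpositivity against zero rather than against $\mathbf{1}_{Q_N}$). For part (i), I would apply the Poisson summation formula of Proposition \ref{prop:Poisson} to the pointwise square $F^2$, whose Fourier transform is the autoconvolution of $\ft F$ and is therefore supported in $[-2,2]^N$: on the lattice $\alpha\Z^N$ with $\alpha<1/2$ the Fourier side collapses to $\alpha^{-N}\|F\|_{L^2}^2$, and since all spatial summands are nonnegative, one obtains a pointwise $L^\infty$-to-$L^2$ inequality for $F$. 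Combined with the Paley--Wiener estimate $\|F\|_\infty \leq |Q_N|^{1/2}\|F\|_{L^2}$, a pigeonhole argument against the constraints $F(\bo 0)=1$ and $F\leq 0$ off $Q_N$ yields the desired dimensional bound $B_N$.

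For part (ii), with the uniform $L^\infty$ bound from (i) I take a maximizing sequence $F_k$ and extract a weak-$*$ limit of $\ft{F_k}$ in $L^\infty(Q_N)$ via Banach--Alaoglu. I then verify the four properties for the inverse Fourier transform $F$: (I) is preserved since the support shrinks under weak limits; (II) by Fatou applied to $-F_k\cdot\mathbf{1}_{Q_N^c}\geq 0$; (III) by Bernstein equicontinuity of band-limited functions, which lifts weak convergence to pointwise convergence at $\bo 0$; and (IV) because $\int F_k \to \Delta(N) > 0$ by hypothesis.

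Part (iii) is the main technical step and what I expect to be the main obstacle. The plan is to derive a functional inequality
\begin{equation*}
1-\Delta(N+1) \;\geq\; (1-\Delta(N)) + \tfrac{1}{2}\bigl(1-\Delta(2)\bigr), \qquad N\geq 2,
\end{equation*}
by starting from an extremizer $F$ for $\Delta(N+1)$ (available by (ii)) and averaging it in one coordinate against a nonnegative kernel to produce a $\Delta(N)$-admissible comparison profile. Nonnegativity of the kernel makes (II) automatic because $F\leq 0$ on the full complement slice, while the Fourier support shrinks to $Q_N$ by the convolution theorem; the quantitative loss is then extracted by comparing the marginal against a $\Delta(2)$-extremizer, so the two-dimensional deficit $1-\Delta(2)$ enters. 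The delicate point is preserving condition (IV) under the slicing, which I expect to handle through a normalization/perturbation argument. Strict monotonicity $\Delta(N+1)<\Delta(N)$ is then immediate whenever $\Delta(N)>0$, and specializing the construction to $N=1$ (where $\Delta(1)=1$) gives $\Delta(2)<1$.

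Finally, for part (iv), iterating the inequality from (iii) starting at $N=2$ gives
\begin{equation*}
\Delta(N) \;\leq\; \Delta(2) - (N-2)\tfrac{1}{2}\bigl(1-\Delta(2)\bigr),
\end{equation*}
which is nonpositive (and hence $\Delta(N)=0$) as soon as $N\geq 2/(1-\Delta(2))$, yielding the upper bound $N_c \leq \lfloor 2/(1-\Delta(2)) \rfloor$. The lower bound $N_c\geq 5$ follows directly from the explicit constructions of $\Delta(N)$-admissible functions for $N\leq 5$ presented in Section \ref{conMin}.
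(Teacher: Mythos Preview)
Your plan bypasses Theorem~\ref{latticeInterpolateN} entirely, and this is where the argument breaks down in two places.

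\textbf{Part (i).} The Poisson summation on $F^2$ and the Paley--Wiener inequality both bound $\|F\|_\infty$ by $\|F\|_2$, but nothing in your outline bounds $\|F\|_2$ (or $\|F\|_1$) in absolute terms. The constraints $F(\bo 0)=1$, $F\le 0$ off $Q_N$, and $\ft F(\bo 0)>0$ only give $\|F\|_1\le 2^{N+1}\max_{Q_N}F$ and similar estimates, all of which scale with the very quantity you are trying to bound; no ``pigeonhole'' closes this loop. The paper's argument is structurally different: assuming $M_\ell:=\max_{Q_N}F_\ell\to\infty$, it normalizes $G_\ell=F_\ell/M_\ell$ (now $\nu(N)$-admissible), extracts a limit $G$ with $G(\bo 0)=0$ and $\ft G(\bo 0)\ge 0$, uses Poisson to force $G(\bn)=0$ for all $\bn\in\Z^N$, and then invokes Theorem~\ref{latticeInterpolateN} to conclude $G\equiv 0$, contradicting $G(\bx_0)=1$ at the accumulation point of the maxima.

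\textbf{Part (iii) and the circularity with $\Delta(2)<1$.} Even granting your functional inequality $1-\Delta(N+1)\ge (1-\Delta(N))+\tfrac12(1-\Delta(2))$, specializing it to $N=1$ gives only $1-\Delta(2)\ge \tfrac12(1-\Delta(2))$, which is vacuous and does \emph{not} yield $\Delta(2)<1$. Without $\Delta(2)<1$ you obtain neither strict monotonicity nor any finite upper bound for $N_c$, so the scheme collapses. The paper obtains $\Delta(N+1)<\Delta(N)$ (and in particular $\Delta(2)<1$) by the same contradiction argument as for $\nu$ in Theorem~\ref{nuTheorem}(ii): if equality held, the extremizer (now known to exist by (ii)) would have to vanish at every nonzero lattice point and hence identically, by Theorem~\ref{latticeInterpolateN}. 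For (iv) the paper does not iterate a recursion; it proves directly, via symmetrization of an extremizer and an orbit-counting comparison in Poisson summation, that $N\mapsto (1-\Delta(N))/N$ is nondecreasing, which immediately gives $N_c\le \lfloor 2/(1-\Delta(2))\rfloor$.

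In short, Theorem~\ref{latticeInterpolateN} is the missing ingredient: it is what the paper uses to obtain both the uniform bound in (i) and the strict inequality $\Delta(2)<1$ that makes (iii) and (iv) effective.
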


We now give some explicit lower bounds for the quantity $\nu(N)$ up to dimension $N=5$ (see Theorem \eqref{construction-lower-bound}). These are constructed explicitly in Section \ref{conMin}.

\begin{theorem}
	 We have the following lower bounds for $\nu(N)$:
		\begin{itemize}
			\item $\nu(2)\geq \tfrac{63}{64} =0.984375 ,$
			\item  $\nu(3)\geq \tfrac{119}{128} = 0.9296875,$
			\item  $\nu(4)\geq \tfrac{95}{128}= 0.7421875,$
			\item $\nu(5)\geq \tfrac{31}{256}=0.12109375$.
		\end{itemize}
\end{theorem}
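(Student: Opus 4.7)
The proof is entirely constructive: for each dimension $N \in \{2,3,4,5\}$, one must exhibit an explicit $\nu(N)$-admissible function $F_N$ whose integral attains the claimed lower bound. No asymptotic or abstract existence argument will do, since the numerical values have specific dyadic form ($63/64$, $119/128$, etc.), suggesting clean closed-form constructions with rational coefficients whose denominators are small powers of $2$.

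The plan is to assemble $F_N$ from tensor products of one-dimensional band-limited building blocks whose Fourier transforms are supported in $[-1,1]$. Natural candidates are Selberg's 1D minorant $c(x)=\sin^2(\pi x)/((\pi x)^2 (1-x^2))$ (with $\int c = 1$), the corresponding majorant $C$ (with $\int C = 3$), the Fejér kernel $\Delta(x) = (\sin \pi x/\pi x)^2$ (with $\int \Delta = 1$), together with their integer translates. Any finite linear combination
\begin{equation*}
F_N(\bx) = \sum_{(\bn,\bg)} a_{\bn,\bg} \prod_{j=1}^N g_j(x_j - n_j), \qquad g_j \in \{c,C,\Delta,\dots\},\ \bn \in \Z^N,
\end{equation*}
automatically has $\widehat{F_N}$ supported in $Q_N$, and its integral is a transparent polynomial in the coefficients $a_{\bn,\bg}$. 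The first step is to exploit the Whittaker-Shannon interpolation formula alluded to earlier in the paper to parametrize candidate admissible functions by their values (and possibly partial derivatives) at the integer lattice, so that by Poisson summation $\int F_N$ becomes a linear functional of lattice data. The lattice constraint $F_N(\bn) \leq \mathbf{1}_{Q_N}(\bn)$ is then a finite collection of scalar inequalities which one can aim to saturate for $\bn \in Q_N \cap \Z^N$ while keeping the lattice values non-positive outside $Q_N$.

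Next, within a finite-dimensional family (say with $\bn$ restricted to $\{-2,-1,0,1,2\}^N$ and $g_j$ drawn from a short menu), I would search for coefficients maximizing the integral subject to the pointwise constraint. For $N=2$ the search space is small enough to handle by hand, using the symmetries of $Q_N$ under coordinate permutations and sign changes to restrict to invariant combinations, and then by inducting on dimension one could try to produce the analogous constructions in $N = 3, 4, 5$ by appending a carefully chosen extra factor along the new coordinate.

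The principal obstacle is verifying the pointwise minorization $F_N(\bx) \leq \mathbf{1}_{Q_N}(\bx)$ away from the lattice. At lattice points the inequality reduces to the finitely many scalar constraints above, but off the lattice it is infinite-dimensional and delicate: in regions where several coordinates satisfy $|x_j| > 1$, the products of $c$'s and $C$'s can exhibit sign reinforcement rather than cancellation, and the coefficients $a_{\bn,\bg}$ must be tuned precisely so the negative contributions dominate there. My approach would be to partition $\R^N$ into hypercubes $\prod_j [m_j, m_j+1]$, exploit the explicit sign patterns and monotonicity/log-concavity properties of $c, C, \Delta$ on each one, and use the rapid $L^\infty$ decay of sinc-type kernels to truncate the analysis to a bounded region near $Q_N$. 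The fact that the bound degrades sharply between $N = 4$ and $N = 5$ reflects the growing rigidity of these constraints, and is consistent with Theorem \ref{nuTheorem} asserting that such elementary constructions must eventually break down.
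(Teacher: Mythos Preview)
Your high-level strategy---explicit band-limited constructions, the interpolation formula to parametrize candidates by lattice data, Poisson summation to read off the integral---matches the paper's approach. But you are missing the one idea that turns this from an open-ended search into a short verification, and without it your plan for checking the pointwise inequality is not workable as stated.

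The paper does not build $F_N$ as a linear combination of translated tensor products of $c$, $C$, $\Delta$. Instead it fixes the single envelope
\[
S(\bx)=\prod_{n=1}^{N}\left(\frac{\sin\pi x_n}{\pi x_n(x_n^2-1)}\right)^{2}
\]
and writes $F_N(\bx)=S(\bx)P_N(\bx)$ with $P_N$ a polynomial of degree at most $4$ in each variable. This ansatz comes directly from the interpolation formula once one forces $F_N$ to vanish at every nonzero lattice point except the ``corners'' of $Q_N$. The payoff is that $S(\bx)\ge 0$ everywhere, so the minorant condition $F_N\le \mathbf{1}_{Q_N}$ becomes a pure \emph{sign condition on the polynomial} $P_N$: one needs $P_N\le 1$ inside $Q_N$ and $P_N\le 0$ when at least one $|x_j|>1$. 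These are algebraic inequalities in a handful of real variables, checkable by elementary manipulations (factoring, AM--GM, case splitting on how many $|x_j|$ exceed $1$). The actual polynomials are built from $\prod(1-x_n^2)$ plus small symmetric corrections like $-\tfrac{1}{16}\sum x_i^4 x_j^4$, and the verification occupies under a page.

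By contrast, your proposed verification---partition $\R^N$ into unit hypercubes, track sign patterns and monotonicity of $c,C,\Delta$ on each, truncate using decay---would require controlling an oscillatory function on infinitely many cells, and the ``rapid $L^\infty$ decay'' of sinc-type kernels is only $O(|x|^{-2})$, which does not obviously dominate the accumulation of contributions in several variables. You never reduce to a finite problem. The paper's factorization $F=S\cdot P$ is exactly the device that collapses this infinite verification to a finite algebraic one; without it, your outline does not yet constitute a proof.
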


The mere admissibility of these functions can be used to produce an improved Diophantine inequality for exponential sums for dimensions $N=2,3,4,5$ (improving a result of \cite{BMV}; see also Section \ref{BMV_section}).

\begin{theorem}\label{BMV_improved_thm}
Let $F(\bx)$ be a $\nu(N)$-admissible function satisfying $\ft F(\bo 0)>0$. Let $\ep_n\in (0,1/2]$ for $n=1,...,N$ and let $\bxi_{m}=(\xi_{m,1},...,\xi_{m,N})\in \R^N/\Z^N$ be vectors for $m=1,...,M$  such that 
$$
\max_{n=1,...,N} \frac{\| \xi_{m,n}\|}{\ep_n}\geq 1
$$
for each $m=1,...,M$. Let $\widetilde \LL = \{\bl \in \Z^N : |\ep_n\ell_n| < 1, \ n=1,...,N\}$. Then 
		\[
			\frac{\ft F(\bo 0)}{\|\ft F\|_\infty}M \leq \underset{{\bn \neq \bo 0}}{\dsum_{\bn\in \widetilde \LL}} \left| \dsum_{m=1}^{M}e(\bn\cdot\bxi_{m})  \right|.
		\]
In particular, due to the constructions on Section \ref{conMin}, if the dimension $N=1,2,3,4$ or $5$ we have
	\begin{equation}\label{BMV_imporoved_ineq}
c_NM \leq \underset{{\bn \neq \bo 0}}{\dsum_{\bn\in \widetilde \LL}} \left| \dsum_{m=1}^{M}e(\bn\cdot\bxi_{m})  \right|,
		\end{equation}
		with $c_N$ depending only on the dimension.
\end{theorem}

\section*{Acknowledgements} We thank Enrico Bombieri, Emanuel Carneiro, Arie Israel, Jeffrey Lagarias, Victor Miller, Hugh Montgomery, Jeffrey Vaaler, Robert Vanderbei, and Rachel Ward for helpful feedback and encouragement. The last named author would like to acknowledge support from NSF grants DMS-0943832 and DMS-1045119. The third named author acknowledges the support from University of Alberta Startup funds and the SFB 1060 grant from the Hausdorff Center for Mathematics.

\section{Preliminaries}
In this section we prove some crucial results and recall as well some basic facts about the theory of Paley-Wiener spaces and extremal functions.

For a given function $F:\Rn \ra \R$ we define its Fourier transform as
	\est{
		\tF(\bxi) = \dint_{\Rn} F(\bx) e^{-2\pi i \bx \cdot \bxi}d\bx.
	}
In this paper we will almost always deal with functions $F(\bx)$ that are integrable and whose Fourier transforms are supported in the box 
	\[
		Q_{N}=[-1,1]^N.
	\]
For this reason, given $p\in [1,2]$ we define $PW^p(Q_N)$ as the set of functions $F\in L^p(\R^N)$ such that their Fourier transform is supported in $Q_N$. By Fourier inversion these functions can be identified with analytic functions that extend to $\C^N$ as entire functions. The following is a special case of Stein's generalization of the Paley-Wiener theorem.

\begin{theorem}[Stein, \cite{SW}]\label{exptypethm}
Let $p\in[1,2]$ and $F\in L^p(\R^N)$. The following statements are equivalent:
\begin{enumerate}[$(i)$]
\item ${\rm supp \,}(\ft F)\subset Q_N$;
\item $F(\bx)$ is a restriction to $\R^N$ of an analytic function defined in $\C^N$ with the property that there exists a constant $C>0$ such that  
	\begin{equation*}
		|F(\bx+i\by)| \leq C \exp\left[2\pi \sum_{n=1}^N|y_n|\right]
	\end{equation*}
	for all $\bx,\by\in\R^N$.
\end{enumerate}
\end{theorem}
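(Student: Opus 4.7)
The plan is to prove the two implications separately. The direction $(i)\Rightarrow(ii)$ is constructive: one builds the entire extension via the Fourier inversion integral over $Q_N$ and reads off the growth rate from the compact support. The direction $(ii)\Rightarrow(i)$ is the substantive one, requiring a contour-deformation argument to witness that $\ft F$ vanishes on any open set disjoint from $Q_N$. The principal ingredients beyond the classical $L^2$ case are the Hausdorff-Young inequality (which guarantees that $\ft F$ is a function when $p\in[1,2]$) and the pointwise bound $\|F\|_\infty\leq C$ that is encoded in condition $(ii)$ at $\by=\bo 0$.

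For $(i)\Rightarrow(ii)$, I would first observe that $\ft F\in L^{p'}(\R^N)$ by Hausdorff-Young, and since $\mathrm{supp}\,\ft F\subset Q_N$ has finite measure, H\"older's inequality upgrades this to $\ft F\in L^1(\R^N)$. The candidate entire extension is
\est{
G(\bz) := \dint_{Q_N} \ft F(\bxi)\,e^{2\pi i\bxi\cdot\bz}\,\d\bxi,\qquad \bz\in\C^N.
}
Since $|e^{2\pi i\bxi\cdot(\bx+i\by)}|=e^{-2\pi\bxi\cdot\by}\leq e^{2\pi\sum_n|y_n|}$ for $\bxi\in Q_N$, the integrand is dominated by the integrable function $|\ft F|\,e^{2\pi\sum_n|y_n|}$ on compact subsets of $\C^N$, so $G$ is well-defined and entire by differentiation under the integral sign. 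The same inequality yields the desired exponential bound with $C=\|\ft F\|_{L^1(Q_N)}$, and Fourier inversion gives $G|_{\R^N}=F$ almost everywhere.

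For $(ii)\Rightarrow(i)$, I would fix $\bxi_0\notin Q_N$ with, say, $\xi_{0,n_0}>1$, and take a Schwartz function $\eta$ whose support lies in a small neighborhood $U\ni\bxi_0$ disjoint from $Q_N$. The pointwise bound $F\in L^\infty$ extracted from $(ii)$ at $\by=\bo 0$ makes the pairing
\est{
\langle\ft F,\eta\rangle = \dint_{\R^N}F(\bx)\,\ft\eta(\bx)\,\d\bx
}
an absolutely convergent integral against the Schwartz function $\ft\eta$. I would then shift the contour in the $x_{n_0}$-variable downward to $x_{n_0}-iR$, $R>0$: the growth bound of $(ii)$ gives $|F(\cdot-iR\bo e_{n_0})|\leq Ce^{2\pi R}$, while the support of $\eta$ in $U$ forces the integral representation of $\ft\eta(\bx-iR\bo e_{n_0})$ to carry a decaying factor $e^{-2\pi\xi_{n_0}R}\leq e^{-2\pi(1+\delta)R}$ together with Schwartz decay in the remaining variables. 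Sending $R\to\infty$ drives the shifted integral to $O(e^{-2\pi\delta R})$ and hence to zero, so $\langle\ft F,\eta\rangle=0$.

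The main obstacle is justifying the contour deformation rigorously: one must verify that the vertical sides of the contour rectangle in the $z_{n_0}$-plane vanish as $|x_{n_0}|\to\infty$, and that the $\bx$-integral on the shifted slab remains absolutely convergent. Both are consequences of the Schwartz decay of $\ft\eta$ (which persists on shifted contours through repeated integration by parts in $\bxi$) combined with the pointwise bound on $F$. A cleaner alternative, should the bookkeeping become cumbersome, is to first replace $F$ by $F(\bx)e^{-\epsilon|\bx|^2}$, run the argument in the friendlier $L^1$ setting, and pass to $\epsilon\to 0$ by dominated convergence.
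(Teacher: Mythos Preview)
The paper does not supply its own proof of this statement: it is quoted as a special case of Stein's generalization of the Paley--Wiener theorem and attributed to \cite{SW}, with no argument given. So there is nothing in the paper to compare your proposal against.

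That said, your outline is essentially the standard proof one finds for this result, and the strategy is sound. A couple of minor remarks. First, note that as written in the paper, condition $(i)$ reads $\mathrm{supp}(F)\subset Q_N$, which cannot be equivalent to $(ii)$ for nonzero $F$ (a compactly supported function cannot be entire); you have silently and correctly read it as $\mathrm{supp}(\ft F)\subset Q_N$, consistent with the paper's definition of $PW^p(Q_N)$ and its subsequent use of the theorem. Second, in your contour-shift for $(ii)\Rightarrow(i)$, the justification that the vertical sides of the rectangle vanish is the only place that needs care; your observation that $\ft\eta$ extends to horizontal strips with Schwartz-type decay in the real directions (via integration by parts in $\bxi$) is exactly what is needed, and the Gaussian regularization you propose as a fallback is also a clean way to handle it. Either route closes the argument.
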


\noindent {\bf Remark.} In particular this theorem implies that every function $F\in PW^1(Q_N)$ is bounded on $\R^N$, hence $PW^1(Q_N)\subset PW^2(Q_N)$.
\smallskip

\begin{theorem}[P\'olya-Plancherel, \cite{PP}]\label{PPthm}
 If $\bxi_1,\bxi_2,...$ is a sequence in $\R^N$ satisfying that $\|\bxi_n-\bxi_m\|_{\infty}\geq \ep$ for all $m\neq n$ for some $\ep>0$ then
\begin{equation*}
\sum_{n}|F(\bxi_n)|^p \leq C(p,\ep)\int_{\R^N}|F(\bxi)|^p d\bxi
\end{equation*}
for every $F\in PW^p(Q_N)$.
\end{theorem}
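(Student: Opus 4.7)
The plan is to exploit the reproducing property of band-limited functions. Since $F\in PW^p(Q_N)$, the support of $\widehat F$ lies in $Q_N$, so if we choose a Schwartz function $\psi$ whose Fourier transform is compactly supported and identically equal to $1$ on $Q_N$ (for instance, a tensor product of one-dimensional bump functions), then $\widehat\psi\cdot\widehat F=\widehat F$ and hence $F=\psi*F$ as distributions. Since $\psi\in\mathcal S$ and $F\in L^p$ with $p\in[1,2]$, the convolution $\psi*F$ is continuous, so this identity produces a continuous representative of $F$ for which pointwise evaluation is unambiguous, and we have the reproducing formula
\[
F(\bxi)=\int_{\R^N} F(\by)\,\psi(\bxi-\by)\,d\by.
\]

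Next I would apply Hölder's inequality to this integral against the finite measure $|\psi(\bxi-\by)|\,d\by$. Writing $F\,\psi=\bigl(F\,|\psi|^{1/p}\bigr)\cdot\bigl(\operatorname{sgn}\psi\cdot|\psi|^{1/p'}\bigr)$ and invoking Hölder with exponents $p,p'$ yields
\[
|F(\bxi)|^p\leq \|\psi\|_1^{p-1}\int_{\R^N}|F(\by)|^p\,|\psi(\bxi-\by)|\,d\by.
\]
(For $p=1$ this is trivial, with the constant replaced by $1$.) Summing over $n$ and swapping the sum and integral via Fubini--Tonelli gives
\[
\sum_n |F(\bxi_n)|^p\leq \|\psi\|_1^{p-1}\int_{\R^N}|F(\by)|^p\,\Bigl(\sum_n |\psi(\bxi_n-\by)|\Bigr)\,d\by.
\]

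The proof then reduces to showing that the inner sum is bounded uniformly in $\by\in\R^N$ by a constant depending only on $p$, $\epsilon$, $N$, and $\psi$. This is where the $\ell^\infty$-separation enters: the hypothesis $|\bxi_n-\bxi_m|_{\ell^\infty}\geq\epsilon$ implies that any closed $\ell^\infty$-ball of radius $R$ centered at $\by$ contains at most $(2R/\epsilon+1)^N$ points $\bxi_n$. Using the Schwartz decay $|\psi(\bx)|\leq C_M(1+|\bx|_\infty)^{-M}$ for any $M$, I would split the sum into dyadic $\ell^\infty$-shells around $\by$ and choose $M>N+1$ to make the resulting geometric sum convergent, producing a bound of the desired form $C(p,\epsilon)$.

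The argument is essentially mechanical once the reproducing identity $F=\psi*F$ is in hand; the main subtlety is justifying pointwise evaluation of $F\in L^p$ (handled by continuity of $\psi*F$) and the uniform packing estimate in the final step. Note that Theorem \ref{exptypethm} plays only an implicit role here, through the identification $F=\psi*F$ made possible by the Fourier support condition; one could alternatively proceed via subharmonicity of $|F|^p$ and a polydisc mean-value inequality combined with the exponential-type estimate, but the convolution approach is cleaner and avoids any complex-analytic machinery beyond what is already contained in the Paley--Wiener theorem.
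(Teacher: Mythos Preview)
The paper does not supply its own proof of this theorem; it is quoted from the literature (the citation \cite{PP}) and used as a black box. Your convolution-reproducing-kernel argument is a correct and standard way to establish the result: the identity $F=\psi*F$ for any $\psi\in\mathcal S$ with $\widehat\psi\equiv 1$ on $Q_N$, the H\"older step yielding $|F(\bxi)|^p\leq\|\psi\|_1^{p-1}\int|F(\by)|^p|\psi(\bxi-\by)|\,d\by$, and the packing bound on $\sum_n|\psi(\bxi_n-\by)|$ via Schwartz decay and the $\ell^\infty$-separation hypothesis are all sound. The only cosmetic point is that the resulting constant depends on $N$ as well as $p$ and $\epsilon$ (through both $\psi$ and the packing count $(2R/\epsilon+1)^N$); the paper's notation $C(p,\epsilon)$ suppresses the $N$-dependence, presumably because $N$ is fixed throughout.
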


\begin{proposition}[Poisson Summation for $PW^1(Q_N)$]\label{prop:Poisson}
For all $F\in PW^1(Q_N)$ and for any $\bt\in\R^N$ we have 
\begin{equation}\label{poisson-sum}
\int_{\R^N}F(\bx)d\bx = \sum_{\bn\in\Z^N} F(\bn+\bt),
\end{equation}
where the convergence is absolute and uniform on compact subsets of $\bt \in \R^N$.
\end{proposition}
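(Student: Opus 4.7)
The idea is to exploit the continuity of $\widehat{F}$. Since $F\in L^1(\R^N)$, the Fourier transform $\widehat{F}$ is continuous on $\R^N$; since by hypothesis $\widehat{F}$ vanishes outside $Q_N$, continuity forces it to vanish on $\partial Q_N$ as well. Because the only point of $\Z^N$ lying in the open cube $(-1,1)^N$ is the origin, this yields $\widehat{F}(\bm)=0$ for every $\bm\in \Z^N\setminus\{\mathbf{0}\}$, while $\widehat{F}(\mathbf{0})=\int_{\R^N}F(\bx)\,d\bx$. The whole task thus reduces to identifying the periodization $G(\bt):=\sum_{\bn\in\Z^N}F(\bn+\bt)$ with its single nonzero Fourier coefficient.

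\textbf{Absolute convergence and Fourier coefficients.} Applying Theorem~\ref{PPthm} with $p=1$ to the $\ell^\infty$-separated lattice $\Z^N+\bt$ gives $\sum_{\bn}|F(\bn+\bt)|\leq C\,\|F\|_{L^1(\R^N)}$, uniformly in $\bt$, so $G$ is pointwise well defined and bounded. A standard Fubini argument (legitimate because $F\in L^1$) then identifies the $\bm$-th Fourier coefficient of $G$ on $\T^N$ as $\widehat{F}(\bm)$. Combined with the vanishing above, one concludes $G(\bt)=\widehat{F}(\mathbf{0})=\int_{\R^N}F(\bx)\,d\bx$ for almost every $\bt$.

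\textbf{Pointwise and uniform convergence; main obstacle.} To upgrade almost-everywhere equality to pointwise equality and uniform convergence on compacta, I would fix a Schwartz function $g$ whose Fourier transform $\widehat{g}$ is smooth, compactly supported, and identically equal to $1$ on $Q_N$, so that $\widehat{F}\,\widehat{g}=\widehat{F}$ and therefore $F=F*g$. Writing $F(\bn+\bt)=\int F(\by)\,g(\bn+\bt-\by)\,d\by$, summing over $\bn$, and interchanging sum and integral (justified by the Schwartz decay of $g$, which makes $\sum_{\bn}|g(\bn+\bu)|$ uniformly bounded in $\bu$), one obtains the representation
\[
G(\bt)=\int_{\R^N}F(\by)\,H(\bt-\by)\,d\by, \qquad H(\bu):=\sum_{\bn\in\Z^N}g(\bn+\bu),
\]
in which $H$ is smooth and $\Z^N$-periodic. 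This exhibits $G$ as a continuous function, upgrading almost-everywhere equality to pointwise. Uniform convergence on a compact set $K$ then follows from the analogous tail estimates on $\sum_{|\bn|_\infty>R}g(\bn+\bt-\by)$, which decay polynomially in $R$ uniformly over $\bt\in K$ and $\by$ in a large ball, the complementary contribution being controlled by $\|F\|_{L^1}$. The one genuinely delicate step is the boundary vanishing of $\widehat{F}$, where band-limitedness meets $L^1$-regularity; everything else is bookkeeping powered by Plancherel--P\'olya and the reproducing identity $F=F*g$.
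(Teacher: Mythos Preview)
The paper states this proposition without proof, treating it as a standard fact about the Paley--Wiener class; there is nothing to compare against. Your argument is correct and supplies a clean self-contained proof.

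The key observation you isolate---that continuity of $\widehat{F}$ together with $\mathrm{supp}\,\widehat{F}\subset Q_N$ forces $\widehat{F}$ to vanish on $\partial Q_N$, hence at every nonzero lattice point---is exactly what makes the Poisson formula collapse to a single term here. The remaining steps (absolute convergence from P\'olya--Plancherel with $p=1$, identification of the Fourier coefficients of the periodization via Fubini, and the upgrade from almost-everywhere to pointwise equality through the reproducing identity $F=F*g$) are all sound. Your uniform-convergence argument via the split $|\by|\le R/2$ versus $|\by|>R/2$ is terse but works: on the first region the shifted tail of $g$ has arguments of size $\gtrsim R$, giving decay from the Schwartz bounds; on the second region the uniform bound $\sum_{\bn}|g(\bn+\bu)|\le C$ reduces matters to $\int_{|\by|>R/2}|F(\by)|\,d\by\to 0$.

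One stylistic remark: once you have shown $G$ is continuous and equal to a constant almost everywhere, the uniform-convergence claim can alternatively be obtained directly from P\'olya--Plancherel applied to a slightly dilated function, avoiding the convolution bookkeeping; but your route is perfectly acceptable.
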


Let $F\in PW^2(Q_N)$. If $\bt\in \C^{N-k}$, then the function $\by\in\R^k\mapsto G_{\bt}(\by)=F(\by,{\bo t})$ is the inverse Fourier transform of the following function
$$
\bxi\in\R^k\mapsto\int_{Q_{N-k}}\ft F(\bxi, {\bo u})e^{2\pi i \bt\cdot\bo u}d{\bo u}.
$$
Since $\ft F\in L^2(\R^N)$, we conclude that the above function has finite $L^2(\R^k)$-norm and as a consequence $G_{\bt}\in PW^2(Q_k)$. A similar result is valid for $p=1$, but only for $\nu(N)$-admissible functions.

\begin{lemma}\label{slice}
Let $N>k>0$ be integers. If $F(\bx)$ is $\nu(N)$-admissible then the function $\by\in\R^k\mapsto F(\by, {\bo 0})$ with ${\bo 0}\in\R^{N-k}$ is $\nu(k)$-admissible and
$$
\int_{\R^N}F(\bx)d\bx \leq \int_{\R^k} F(\by,{\bo 0})d\by.
$$
\end{lemma}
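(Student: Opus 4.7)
The plan is to slice $F$ along $\R^k\times\{\mathbf 0\}$ and apply Poisson summation in the remaining $N-k$ coordinates. Fix $\by\in\R^k$ such that $G_\by(\bz):=F(\by,\bz)$ lies in $L^1(\R^{N-k})$; by Fubini this holds for almost every $\by$. Since $F$ extends to an entire function on $\C^N$ satisfying the exponential type bound of Theorem~\ref{exptypethm}, fixing the first $k$ coordinates to be real yields the matching exponential bound for $G_\by$ in the remaining variables, so by the same theorem $G_\by\in PW^1(Q_{N-k})$. Proposition~\ref{prop:Poisson} with zero shift then gives
\[
\int_{\R^{N-k}} F(\by,\bz)\,\d\bz \;=\; \sum_{\bn\in\Z^{N-k}} F(\by,\bn).
\]

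Next I will show that $F(\by,\bn)\le 0$ for every $\bn\in\Z^{N-k}\setminus\{\mathbf 0\}$. Any such $\bn$ satisfies $|\bn|_\infty\ge 1$, so $(\by,\bn)\in\R^N\setminus\mathrm{int}(Q_N)$ no matter where $\by$ sits. The function $F$ is continuous on $\R^N$ (being the restriction of an entire function), while the a.e.\ inequality $F\le \mathbf 1_{Q_N}=0$ on the open set $\R^N\setminus Q_N$ forces $F\le 0$ pointwise there; continuity then propagates this bound to the closure $\R^N\setminus\mathrm{int}(Q_N)$. Each term with $\bn\ne \mathbf 0$ in the Poisson sum is therefore nonpositive, and we obtain the pointwise inequality
\[
\int_{\R^{N-k}} F(\by,\bz)\,\d\bz \;\le\; F(\by,\mathbf 0) \;=:\; G_0(\by).
\]

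It remains to verify that $G_0$ is $\nu(k)$-admissible and to deduce the integral bound. The estimate $G_0\le\mathbf 1_{Q_k}$ is immediate from $F\le\mathbf 1_{Q_N}$. The inclusion $\mathrm{supp}\,\widehat{G_0}\subset Q_k$ follows from Fourier inversion: one reads off $\widehat{G_0}(\bxi)=\int_{Q_{N-k}}\widehat{F}(\bxi,\bu)\,\d\bu$, which vanishes whenever $\bxi\notin Q_k$. For $G_0\in L^1(\R^k)$, set $\phi(\by):=\int_{\R^{N-k}} F(\by,\bz)\,\d\bz$; Fubini gives $\phi\in L^1(\R^k)$, and the inequality $\phi\le G_0$ above forces $G_0^-\le \phi^-\in L^1$, while $G_0^+\le \mathbf 1_{Q_k}$ is trivially integrable, so $G_0\in L^1(\R^k)$. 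Finally, integrating $\phi\le G_0$ over $\by$ and invoking Fubini once more yields $\int_{\R^N}F\le\int_{\R^k}G_0$. The main subtlety of the proof is the continuity propagation step in the second paragraph: the a.e.\ inequality $F\le 0$ off $Q_N$ must be upgraded to a genuine pointwise inequality on $\partial Q_N$, because it is precisely at boundary lattice points $\bn\in\{-1,0,1\}^{N-k}\setminus\{\mathbf 0\}$ that the nominal bound $F\le\mathbf 1_{Q_N}=1$ on $Q_N$ would otherwise permit uncontrolled positive contributions to the Poisson sum.
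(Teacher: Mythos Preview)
Your argument is correct and is more direct than the paper's. Both proofs ultimately rest on the same observation---that $F(\by,\bn)\le 0$ whenever $\bn\in\Z^{N-k}\setminus\{\mathbf 0\}$, so Poisson summation in the transverse variables yields $\int_{\R^{N-k}}F(\by,\bz)\,\d\bz\le F(\by,\mathbf 0)$---but they justify the use of Poisson summation differently. You invoke Fubini to guarantee that the slice $\bz\mapsto F(\by,\bz)$ lies in $L^1(\R^{N-k})$ for a.e.\ $\by$, combine this with the inherited exponential-type bound to place the slice in $PW^1(Q_{N-k})$, and then apply Proposition~\ref{prop:Poisson} directly. The paper instead regularizes: it multiplies by a Fej\'er-type factor $\bigl(\tfrac{\sin a\pi x}{a\pi x}\bigr)^2$ and rescales to produce functions $G_a\in PW^1(Q_1)$ and $F_a\in PW^1(Q_2)$ by H\"older, applies Poisson over $\Z$ and $\Z^2$ respectively, compares the two sums termwise, and then sends $a\to 0$ via Fatou's lemma.

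Your route buys brevity and transparency: once the a.e.\ $L^1$ membership of the slices is in hand, nothing further is needed. The paper's regularization avoids having to single out a full-measure set of good $\by$, at the cost of an extra limiting argument. It is worth noting that the continuity upgrade you flag---passing from the a.e.\ inequality $F\le 0$ off $Q_N$ to a genuine pointwise inequality on $\partial Q_N$---is needed in \emph{both} proofs (the paper glosses over it when asserting ``the above inequality is valid because the function $F(x,y)$ is a minorant of the box $Q_2$''), and you have made it explicit.
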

\begin{proof}
We give a proof only for the case $N=2$ since it will be clear that the general case follows by an adaption of the following argument. 

Let $F(x,y)$ be a function admissible for $\nu(2)$ and define $G(x)=F(x,0)$. By Fourier inversion we obtain that
$$
G(x)=\int_{-1}^1\left( \int_{-1}^1\ft F(s,t)dt \right)e^{2\pi i sx}ds.
$$
This shows that $G\in PW^2(Q_1)$. Now, for every $a\in(0,1)$ define the functions
$$
G_a(x)=G((1-a)x)\left(\frac{\sin(a\pi x)}{a\pi x}\right)^2.
$$
and
$$
F_a(x,y)=F((1-a)x,y)\left(\frac{\sin(a\pi x)}{a\pi x}\right)^2.
$$
By an application of Holder's inequality and Theorem \ref{exptypethm}, we deduce that $G_a\in PW^1(Q_1)$ and $F_a\in PW^1(Q_2)$ for all $a\in(0,1)$. Hence, we can apply Poisson summation to conclude that
\begin{align*}
\int_{\R}G_a(x)dx = & \sum_{n\in\Z} G((1-a)n)\left(\frac{\sin(a\pi n)}{a\pi n}\right)^2 \\ \geq & \sum_{(n,m)\in\Z^2} F((1-a)n,m)\left(\frac{\sin(a\pi n)}{a\pi n}\right)^2 \\
= & \int_{\R^2}F((1-a)x,y)\left(\frac{\sin(a\pi x)}{a\pi x}\right)^2 dxdy,
\end{align*}
where the above inequality is valid because the function  $F(x,y)$ is a minorant of the box $Q_2$. Observing that $G_a(x) \leq {\bo 1}_{Q_1/(1-a)}(x)$ for every $x\in\R$, we  can apply Fatou's lemma to conclude that
\begin{align*}
\int_{\R}[{\bo 1}_{Q_1}(x)-G(x)]dx \leq & \liminf_{a\to 0} \int_{\R}[{\bo 1}_{Q_1/(1-a)}(x)-G_a(x)]dx \\
\leq & \int_{\R}{\bo 1}_{Q_1}(x)dx - \limsup_{a\to 0} \int_{\R^2}F((1-a)x,y)\left[\frac{\sin(a\pi x)}{a\pi x}\right]^2 dxdy \\
= & \int_{\R}{\bo 1}_{Q_1}(x)dx - \int_{\R^2} F(x,y)dxdy < \infty.
\end{align*}
This concludes the proof.
\end{proof}

We now introduce an interpolation theorem which has proven indispensable throughout our investigations.

\begin{proposition}
For all $F\in PW^2(Q_N)$ we have
	\es{\label{int-form-gen}
		F(\bo x) = \dprod_{n=1}^N \bigg( \dfrac{\sin\pi x_{n}}{\pi} \bigg)^{2}  \sum_{\bo n\in\Z^N} \sum_{\,\,\bo j\in\{0,1\}^N}\frac{\partial_{\bo j}F(\bo n)}{(\bo x-\bo n)^{\bo 2-\bo j}}
}
where $\partial_{\bo j}=\partial_{j_1...j_N}$ and $(\bo x-\bo n)^{\bo 2-\bo j}=(x_1-n_1)^{2-j_1}...(x_N-n_N)^{2-j_N}$ and the right hand side of \eqref{int-form-gen} converges uniformly on compact subsets of $\Rn$.
\end{proposition}
\begin{proof}
This proposition is proven by induction using Vaaler's result \cite[Theorem 9]{V} as the base case and Theorem \ref{PPthm} (P\'olya-Plancherel), which  guarantees that the sequence $\{F(\bn) : \bn\in\Z^N\}$ is square summable for any $F\in PW^2(Q_N)$. Also note that by Fourier inversion $PW^2(Q_N)$ is closed under partial differentiation.
\end{proof}

Finally, the next lemma demonstrates that extremal functions always exist for $\nu(N)$ and other minorization problems.

\begin{lemma}\label{sequenceLemma}
Suppose $G\in L^{1}(\Rn)$ is a real valued function. Let $F_{1}(\bx), F_{2}(\bx),...$ be a sequence in $PW^1(Q_N)$ such that $F_{\ell}(\bx)\leq G(\bx)$ for each $\bx\in \Rn$ and each $\ell$. Assume that there exists $A>0$ such that $\ft{F}_{\ell}({\bo 0})\geq -A$ for each $\ell$. Then there exists a subsequence $F_{\ell_{k}}(\bx)$ and a function $F\in PW^1(Q_N)$ such that $F_{\ell_{k}}(\bx)$ converges to $F(\bx)$ uniformly on compact sets as $k$ tends to infinity. In particular, we deduce that $F(\bx)\leq G(\bx)$ for each $\bx\in \Rn$ and $\limsup_{k\rightarrow \infty} \ft{F}_{{\ell_k}}({\bo 0})\leq \ft{F}({\bo 0})$.
\end{lemma}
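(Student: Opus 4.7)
The plan is a standard normal-families argument combined with Arzel\`a--Ascoli. The first step is to derive a uniform $L^{1}$ bound on the sequence. Writing $F_{\ell}^{\pm}$ for the positive and negative parts, the hypothesis $F_{\ell}\leq G$ forces $F_{\ell}^{+}\leq G^{+}$, so $\|F_{\ell}^{+}\|_{1}\leq\|G^{+}\|_{1}$. The assumption $\ft{F}_{\ell}({\bf 0})=\|F_{\ell}^{+}\|_{1}-\|F_{\ell}^{-}\|_{1}\geq-A$ then yields $\|F_{\ell}^{-}\|_{1}\leq\|G^{+}\|_{1}+A$, whence $\|F_{\ell}\|_{1}\leq C_{1}:=2\|G^{+}\|_{1}+A$. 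Because $\ft{F}_{\ell}$ is supported in the compact box $Q_{N}$ and $\|\ft{F}_{\ell}\|_{\infty}\leq\|F_{\ell}\|_{1}\leq C_{1}$, Fourier inversion yields the uniform pointwise bound
\est{|F_{\ell}(\bx)|\leq\int_{Q_{N}}|\ft{F}_{\ell}(\bxi)|\,\ddxi\leq 2^{N}C_{1}=:M,}
and, using $|e^{2\pi i\bx\cdot\bxi}-e^{2\pi i\by\cdot\bxi}|\leq 2\pi|\bxi|\,|\bx-\by|$, the uniform Lipschitz estimate $|F_{\ell}(\bx)-F_{\ell}(\by)|\leq L|\bx-\by|$ with $L=2\pi C_{1}\int_{Q_{N}}|\bxi|\,\ddxi$.

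The second step applies Arzel\`a--Ascoli. The family $\{F_{\ell}\}$ is uniformly bounded and uniformly equicontinuous on $\Rn$, so on each closed ball $B_{R}\subset\Rn$ one can extract a uniformly convergent subsequence; diagonalizing over $R=1,2,3,\dots$ produces a single subsequence $F_{\ell_{k}}$ converging uniformly on compact subsets of $\Rn$ to a continuous function $F$ with $\|F\|_{\infty}\leq M$ and $F\leq G$. Fatou's lemma gives $\|F\|_{1}\leq\liminf_{k}\|F_{\ell_{k}}\|_{1}\leq C_{1}$, so $F\in L^{1}(\Rn)$. To place $F$ in $\PW^{1}(Q_{N})$ it remains to show $\supp(\ft{F})\subset Q_{N}$. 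For any Schwartz $\phi$ with $\supp(\phi)\subset\Rn\setminus Q_{N}$, duality gives $\int F_{\ell}(\bx)\ft{\phi}(\bx)\,\dx=\int\ft{F}_{\ell}(\bxi)\phi(\bxi)\,\ddxi=0$ for every $\ell$. Since $|F_{\ell_{k}}\ft{\phi}|\leq M|\ft{\phi}|\in L^{1}(\Rn)$, dominated convergence yields $\int F(\bx)\ft{\phi}(\bx)\,\dx=0$, and since such $\phi$'s separate distributions supported in $\Rn\setminus Q_{N}$, we conclude $\supp(\ft{F})\subset Q_{N}$.

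The two remaining conclusions are then immediate. Pointwise convergence $F_{\ell_{k}}\to F$ together with $F_{\ell_{k}}\leq G$ gives $F\leq G$. For the limsup, apply Fatou's lemma to the nonnegative sequence $G-F_{\ell_{k}}$:
\est{\int(G-F)\,\dx\leq\liminf_{k\to\infty}\int(G-F_{\ell_{k}})\,\dx,}
which rearranges to $\limsup_{k}\ft{F}_{\ell_{k}}({\bf 0})\leq\ft{F}({\bf 0})$.

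The main obstacle is keeping the limit inside $\PW^{1}(Q_{N})$: both the $L^{1}$ integrability and the Fourier-support condition require independent verification. Fatou handles the former, but the latter hinges on the uniform $L^{\infty}$ bound deduced from Fourier inversion, which is precisely what allows one to pass the duality identity $\int F_{\ell}\ft{\phi}\,\dx=\int\ft{F}_{\ell}\phi\,\ddxi$ to the limit via dominated convergence.
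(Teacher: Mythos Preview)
Your proof is correct and follows essentially the same strategy as the paper: a uniform $L^{1}$ bound, Fourier inversion to get uniform $L^{\infty}$ and equicontinuity estimates, Arzel\`a--Ascoli for local uniform convergence, and Fatou for both $F\in L^{1}$ and the $\limsup$ inequality. The only noteworthy difference is in verifying $\supp(\ft{F})\subset Q_{N}$: the paper first passes to a weak $L^{2}$ limit via Banach--Alaoglu (so the limit automatically lies in the closed subspace $PW^{2}(Q_{N})$) and then invokes Arzel\`a--Ascoli, whereas you go straight to Arzel\`a--Ascoli and recover the Fourier support afterward by a duality argument with test functions. Both routes are equally valid; yours is arguably more self-contained since it avoids the auxiliary weak-compactness step.
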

\begin{proof}
	By the remark after Theorem \ref{exptypethm}, each $F_{\ell}\in PW^2(Q_N)$ and we can bound their $L^2(\R^N)$-norm in the following way
		\[
			\|F_{\ell} \|_{2}=\|\ft{F}_{\ell} \|_2 \leq \mathrm{vol}_{N}(Q_{N})^{1/2} \| \ft{F}_{\ell}\|_{\infty} \leq 2^{N/2} \| F_{\ell} \|_{1}
		\]
	and
		\es{\label{eq-40}
			\| F_{\ell}\|_{1}\leq \| G- F_{\ell}\|_{1}+\|G\|_{1}\leq 2\|G\|_{1}+A.
		}
Hence the sequence $F_{1}(\bx),F_{2}(\bx),...$ is uniformly bounded in $L^{2}(\Rn)$ and, by the Banach-Alaoglu theorem, we may extract a subsequence (that we still denote by $F_{\ell}(\bx)$) that converges weakly to a function $F\in PW^2(Q_N)$. By Theorem \ref{exptypethm} we can assume that $F(\bx)$ is continuous. By using Fourier inversion we have
$$
F_\ell(\bx) = \int_{Q_N} \ft F_\ell(\bxi)e(\bxi\cdot \bx) d\bxi.
$$
Thus, the weak convergence implies that $F_\ell(\bx)\to F(\bx)$ point-wise for all $x\in\R^N$. Fourier inversion also shows that $\|F_\ell\|_{\infty}\leq 2^N \|F_\ell\|_1$. However, we also have
$$
|\partial_j F_\ell(\bx)| =  2\pi \bigg|\int_{Q_N} \xi_j\ft F_\ell(\bxi)e(\bxi\cdot \bx) d\bxi\bigg| \leq 2\pi \|F_\ell\|_1 2^{N}.
$$
We can use \eqref{eq-40} to conclude that $|F_\ell(\bx)|+|\nabla F_\ell(\bx)|$ is uniformly bounded in $\R^N$. We can apply the Ascoli-Arzel\`a theorem to conclude that, by possibly extracting a further subsequence, $F_\ell(\bx)$ converges to $F(\bx)$ uniformly on compact sets of $\R^N$.

We conclude that $G(\bx)\geq F(\bx)$ for each $\bx\in\Rn$. By applying Fatou's lemma to the sequence of functions $G(\bx)-F_{1}(\bx),G(\bx)- F_{2}(\bx),...$ we find that $F\in L^{1}(\Rn)$ and 
	\[
		\displaystyle\limsup_{\ell\ra \infty} \dint_{\Rn} F_\ell(\bx)d\bx \leq \dint_{\Rn}F(\bx)d\bx.
	\]
	This concludes the lemma.
\end{proof}


\section{Proofs of the Main Results}

The next theorem is the cornerstone in the proof of our main results. This theorem is in stark contrast with the one dimensional case. In the one dimensional case Selberg's function interpolates at all lattice points, and is therefore extremal. In two dimensions, on the other hand, if a minorant interpolates everywhere except for possibly the origin, then it is identically zero. This theorem is therefore troublesome because it seems to disallow the possibility of using interpolation (in conjunction with Poisson summation) to prove an extremality result.

\begin{theorem}\label{latticeInterpolate}
Let $F(x,y)$ be admissible for $\nu(2)$ such that $F(n,m)=0$ for each non-zero $(n,m)\in\Z^2$ and $F(0,0) \geq 0$, then $F(x,y)$ vanishes identically.
\end{theorem}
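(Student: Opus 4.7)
The plan is to extract enough rigidity from the sign constraint $F \le 0$ outside $Q_2$ and the vanishing of $F$ on $\Z^2\setminus\{\boldsymbol{0}\}$ to force the product structure $F(x,y) = c\,\mathrm{sinc}^2(x)\mathrm{sinc}^2(y)$ with $c = F(0,0) \ge 0$; this is incompatible with admissibility unless $c = 0$. The key building block is a one-dimensional vanishing lemma: if $g \in \PW^2(Q_1)$ is non-positive on $\R$ and vanishes at every integer, then $g \equiv 0$. Indeed, the 1D interpolation formula (\ref{int-form-gen}) gives $g(y) = (\sin \pi y/\pi)^2 \sum_m g'(m)/(y-m)$, so near each integer $n$ we have $g(y) \sim (y-n)g'(n)$; non-positivity forces $g'(n)=0$ for every $n$, and the formula then yields $g \equiv 0$.

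The first step applies this lemma to $y \mapsto F(k,y)$ for each $k \in \Z\setminus\{0\}$. This slice lies in $\PW^2(Q_1)$ and vanishes at every integer since every $(k,m)$ is a non-zero lattice point. It is also globally non-positive on $\R$: by continuity of $F$ (from Theorem~\ref{exptypethm}) together with $F \le 0$ outside $Q_2$, we have $F(k,y) \le 0$ (for $k = \pm 1$ this uses a one-sided limit from outside $Q_2$). Hence $F(k,y) \equiv 0$ for every $k \ne 0$, and symmetrically $F(x,k) \equiv 0$ for every $k \ne 0$.

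Next, for any $y_0$ with $|y_0| > 1$, the slice $g_{y_0}(x) = F(x,y_0)$ lies in $\PW^2(Q_1)$, vanishes at every non-zero integer by the previous step, and is globally non-positive because $(x,y_0) \notin Q_2$ for every $x$. The local sign argument from the lemma forces $g'_{y_0}(k) = 0$ for all $k \ne 0$, so the 1D interpolation formula collapses to $g_{y_0}(x) = (\sin\pi x/\pi)^2[g_{y_0}(0)/x^2 + g'_{y_0}(0)/x]$. Non-positivity of this expression on all of $\R$ forces the linear factor $g_{y_0}(0) + x g'_{y_0}(0)$ to be non-positive throughout $\R$, whence $g'_{y_0}(0) = 0$. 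Thus $F(x,y_0) = F(0,y_0)\,\mathrm{sinc}^2(x)$ for every $|y_0| > 1$, where $\mathrm{sinc}(x) = \sin(\pi x)/(\pi x)$.

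The entire function $R(x,y) = F(x,y) - F(0,y)\,\mathrm{sinc}^2(x)$ on $\C^2$ then vanishes on $\R \times \{y \in \R : |y|>1\}$. Applying the identity principle for holomorphic functions of one variable, first in $x$ for each such $y_0$ and then in $y$ for each $x_0 \in \C$, yields $R \equiv 0$. Thus $F(x,y) = F(0,y)\,\mathrm{sinc}^2(x)$ globally, and by the symmetric argument $F(x,y) = F(x,0)\,\mathrm{sinc}^2(y)$. Setting $y=0$ in the former gives $F(x,0) = c\,\mathrm{sinc}^2(x)$ with $c = F(0,0) \ge 0$, and substituting back produces $F(x,y) = c\,\mathrm{sinc}^2(x)\mathrm{sinc}^2(y)$. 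Evaluating at $(3/2,0) \notin Q_2$ gives $0 \ge c\,\mathrm{sinc}^2(3/2)$, and since $\mathrm{sinc}^2(3/2) > 0$ we deduce $c \le 0$; combined with $c \ge 0$ this forces $c = 0$, so $F \equiv 0$. The main technical hurdle is the first step, where the sign constraint immediately outside $Q_2$ must be combined with the lattice vanishing to rigidify $F$ along all grid lines $\{x=k\} \cup \{y=k\}$ with $k \ne 0$; once that rigidity is in place, 1D interpolation and analytic continuation in two variables do the rest.
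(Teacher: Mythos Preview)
Your proof is correct and takes a genuinely different route from the paper's.

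Both arguments share the opening move: for each non-zero integer $k$, the slice $y\mapsto F(k,y)$ is a non-positive function in $\PW^2(Q_1)$ vanishing on $\Z$, so the one-dimensional interpolation formula kills it. From there the two proofs diverge. The paper first symmetrizes $F$ under the dihedral group of $Q_2$, then invokes the full two-dimensional interpolation formula \eqref{int-form-gen} together with local-maximum and Hessian constraints at lattice points to reduce $F$ to a two-parameter family $S(x)S(y)B(x,y)$; a sign analysis of the polynomial $B$ on $\R^2\setminus Q_2$ then forces the parameters to vanish, and a final desymmetrization step handles the general case. You instead stay purely one-dimensional: apply the $\PW^2(Q_1)$ interpolation formula to each horizontal slice at height $|y_0|>1$, use the global sign to collapse it to $F(0,y_0)\,\mathrm{sinc}^2(x)$, and then let the identity theorem in two complex variables propagate this product structure everywhere.

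Your approach buys you freedom from the two-variable interpolation formula, from the symmetrization/desymmetrization bookkeeping, and from the somewhat delicate polynomial sign analysis in the paper's Step~2; the only tools you need are the one-variable formula and analytic continuation. The paper's approach, on the other hand, makes the structure of admissible interpolating minorants completely explicit (the family $B(x,y)$), which is in the spirit of the constructions in Section~\ref{conMin}.
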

\begin{proof}
{\it Step 1.} First we assume that the function $F(x,y)$ is invariant under the symmetries of the square, that is,
\es{\label{symm-F}
F(x,y)=F(y,x)=F(|x|,|y|)
}
for all $x,y\in\R$. We claim that for any $(m,n)\in\Z^2$ we have:
\begin{enumerate}[(a)]
\item $\partial_{x}F(m,n)=0$ if $(m,n) \neq (\pm 1,0)$ and $\partial_{y}F(m,n)=0$ if $(m,n)\neq (0,\pm 1)$;
\item $\partial_{xx}F(m,n)=0$ if $n\neq 0$ and $\partial_{yy}F(m,n)=0$ if $m\neq 0$;
\item $\partial_{xy}F(m,n)=0$ if $n \neq \pm 1$ or $m\neq \pm 1$.
\end{enumerate}
 
We can apply Theorem \ref{slice} to deduce that, for each fixed non-zero integer $n$, the function $x\in\R\mapsto F(x,n)$ is a non-positive function belonging to $PW^1(Q_1)$ that vanishes in the integers, hence identically zero by formula \eqref{int-form-gen}. Also note that the points $(m,0)$ for $m\in\Z$ with $|m|> 1$ are local maximums of the function $x\in\R \mapsto F(x,0)$. These facts in conjunction  with the invariance property \eqref{symm-F} imply items (a) and (b).

Finally, note that a point $(m,n)$ with $|n|>1$ has to be a local maximum of the function $F(x,y)$. Thus, the Hessian determinant of $F(x,y)$ at such a point has to be non-negative, that is,
$$
\mathrm{Hess}_F(m,n) := \partial_{xx}F(m,n)\partial_{yy}F(m,n) - [\partial_{xy}F(m,n)]^2 \geq 0
$$
However, by item (b), $\partial_{xx}F(m,n)=0$ and we conclude that $\partial_{xy}F(m,n)=0$. This proves item (c) after using again the property \eqref{symm-F}.
\smallskip

{\it Step 2.}
We can now apply formula \eqref{int-form-gen} and deduce that $F(x,y)$ has to have the following form
$$
F(x,y)=\bigg(\frac{\sin \pi x}{\pi x}\bigg)^2\bigg(\frac{\sin \pi y}{\pi y}\bigg)^2 \bigg\{F(0,0) +a\frac{x^2}{x^2-1} +a\frac{y^2}{y^2-1}+b\frac{x^2y^2}{(x^2-1)(y^2-1)}\bigg\},
$$
where $a=2\partial_xF(1,0)$ and $b=4\partial_{xy}F(1,1)$. Denote by $B(x,y)$ the expression in the brackets above and note that it should be non-positive if $|x|\geq 1$ or $|y|\geq 1$. We deduce that
$$
F(0,0)+a+(a+b)\frac{x^2}{x^2-1}=B(x,\infty) \leq 0
$$ 
for all real $x$. We conclude that $a+b=0$, $F(0,0)\leq -a$ and
$$
B(x,y) = F(0,0) + a\bigg[1-\frac{1}{(x^2-1)(y^2-1)}\bigg].
$$
For each $t>0$, the set of points $(x,y)\in\R^2\setminus Q_2$ such that $(x^2-1)(y^2-1)=1/t$ is non-empty and $B(x,y)=F(0,0)+a - at$ at such a point. Therefore $a\geq 0$ and we deduce that $F(0,0)\leq 0$. We conclude that $F(0,0)=0$, which in turn implies that $a=0$. Thus $F(x,y)$ vanishes identically.
\smallskip

{\it Step 3.} Now we finish the proof. Let $F(x,y)$ be a $\nu(2)$-admissible function such that $F(0,0) = \ft F(0,0) \geq 0$. Define the function
$$
G_1(x,y)=\frac{F(x,y)+F(-x,y)+F(x,-y)+F(-x,-y)}{4}.
$$
Clearly the following function
$$
G_0(x,y)=\frac{G_1(x,y)+G_1(y,x)}{2}
$$
is also $\nu(2)$-admissible and $G_0(0,0)=\ft G_0(0,0)=F(0,0) \geq 0$. Moreover, $G_0(x,y)$ satisfies the symmetry property \eqref{symm-F}.
By Steps $1$ and $2$ the function $G_0(x,y)$ must vanish identically. Thus, we obtain that
$$
G_1(x,y)=-G_1(y,x).
$$
However, since $G_1(x,y)$ is also $\nu(2)$-admissible we conclude that $G_1(x,y)$ is identically zero outside the box $Q_2$, hence it vanishes identically. An analogous argument can be applied to the function $G_2(x,y)=[F(x,y)+F(-x,y)]/2$ to conclude that this function is identically zero outside the box $Q_2$, hence it vanishes identically. Using the same procedure again we finally conclude that $F(x,y)$ vanishes identically and the proof of the theorem is complete.
\end{proof} 

\subsection{Proof of Theorem \ref{latticeInterpolateN}}
The proof is done via induction and the base case is Theorem \ref{latticeInterpolate}. Assume that the theorem is proven for some dimension $N\geq 2$. Let $F(\bx,x_{N+1})$ be a $\nu(N+1)$-admissible function such that $F(\bn,m)=0$ for all non-zero $(\bn,m)\in \Z^{N+1}$. Now, for every fixed $t\in\R$ define $G_t(\bx)=F(\bx,t)$. An application of Lemma \ref{slice} shows that $G_t\in PW^1(Q_N)$ for all $t\in\R$ and is $\nu(N)$-admissible if $|t|<1$ and non-positive if $|t|\geq 1$. Moreover, for any fixed non-zero $m\in\Z$ we have $G_m(\bn)=0$ for all $\bn\in\Z^N$, thus by induction we have $G_m\equiv 0$ for every non-zero $m\in\Z$. By symmetry we have $F(\bx,x_{N+1})=0$ if one of its entries is a non-zero integer. We conclude that the $\nu(N)$-admissible function $G_t(\bx)$ satisfies $G_t(\bn)=0$ for every non-zero $\bn\in\Z^N$. By induction again, $G_t\equiv 0$ for all real $t$. This implies that $F\equiv0$ and this finishes the proof.

\subsection{Proof of Theorem \ref{nuTheorem}}
The item $(i)$ is a direct consequence of Lemma \ref{sequenceLemma} while item $(iii)$ is a consequence of Theorem \ref{Deltathm} item $(iv)$. It remains to show item $(ii)$. 

Clearly by Lemma \ref{slice}, we have $\nu(N)\geq \nu(N+1)$. Suppose by contradiction that $\nu(N)=\nu(N+1)$. Let $(\bx,t)\in\R^N\times \R\mapsto F(\bx,t)$ be an extremal function for $\nu(N+1)$. Let $G_{m}(\bx)=F(\bx,m)$ for each $m\in \Z$. Lemma \ref{slice} implies that $G_m(\bx)$ is also admissible for $\nu(N)$ (if $m\neq 0$ then the function is non-positive). By the Poisson summation formula we have for each non-zero $m\in\Z$
		\begin{equation}\label{eq:sum1}
			\ft{F}({\bo 0})=\dsum_{\bn\in\Z^N}\dsum_{k\in\Z} F(\bn,k) \leq \dsum_{\bn\in\Z^N}( F(\bn,m) + F(\bn,0)) = \ft{G}_{m}(0) + \ft{G}_{0}(0).
		\end{equation}
By assumption 
		\begin{equation}\label{eq:assum1}
			 \ft{G}_{0}(0)\leq \nu(N)=\nu(N+1)=\ft{F}({\bo 0})
		\end{equation}
Combining \eqref{eq:sum1} and \eqref{eq:assum1} yields  $0 \leq  \ft{G}_{m}(0) $ for each $m\neq0$. However, the function $G_{m}(\bx)\leq 0$ for each $\bx\in\Rn$ whenever $m$ is a non-zero integer. Consequently,  $G_{m}(\bx)$ vanishes identically. It follows that $F(\bn)=0$ for each non-zero $\bn\in\Z^{N+1}$. By Theorem \ref{latticeInterpolateN}, $F(\bx)$ vanishes identically. Therefore $\nu(N+1)=\nu(N)=0$, a contradiction.  The theorem is finished.

\subsection{Proof of Theorem \ref{Deltathm}}
First we prove item $(i)$. Assume by contradiction that there exists a sequence of $\Delta(N)$-admissible functions $F_\ell(\bx)$ $\ell=1,2,...$ such that $M_\ell=\max_{x\in Q_N} \{F_\ell(\bx)\}$ converges to $\infty$ when $\ell\to \infty$. Let $G_\ell(\bx)=F_\ell(\bx)/M_\ell$, and note that $G_\ell(\bx)$ is $\nu(N)$-admissible for all $\ell$. Also let $\bx_\ell\in Q_N$ be such that $F_\ell(\bx_\ell)=M_\ell$. We can assume by compactness that $\bx_\ell\to \bx_0$. By Lemma \ref{sequenceLemma} we may also assume that there exists a function $G(\bx)$, $\nu(N)$-admissible such that $G_\ell(\bx)$ converges uniformly on compact sets to $G(\bx)$. We also have by Lemma \ref{sequenceLemma} that
$$
0 \leq \limsup_\ell \int_{\R^N} G_\ell(\bx)d\bx \leq \int_{\R^N} G(\bx)d\bx.
$$
However, $G_\ell(\bo 0)=1/{M_\ell}\to 0$ and thus $G(\bo 0)=0$. By the Poisson summation formula, for any fixed non-zero $\bn \in\Z^N$ we have
$$
0\leq \ft G_\ell(\bo 0) \leq 1/M_\ell + G_\ell(\bn).
$$
Thus, we conclude that $G_\ell(\bn)\to 0$ as $\ell \to \infty$. This, implies that $G(\bn)=0$ for all $\bn\in\Z^N$. By Theorem \ref{latticeInterpolateN} we conclude that $G(\bx)$ vanishes identically. However, by uniform convergence we have $G(\bx_0)=1$, a contradiction. This proves item $(i)$

Item $(ii)$ is a consequence of Lemma \ref{sequenceLemma} in conjunction with item $(i)$. Item $(iii)$ can be proven exactly as in Theorem \ref{nuTheorem} item $(ii)$, since now we know that extremizers exist. It remains to show the upper bound of item $(iv)$. For this we will show a stronger result.

\begin{lemma}\label{Delta-dec-lemma}
The function 
$$
\delta(N)=\frac{1-\Delta(N)}{N}
$$
is non-decreasing. That is, if $\Delta(N)>0$ and $M<N$ then $\delta(M)\leq \delta(N)$.
\end{lemma}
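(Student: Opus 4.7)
My approach is to obtain $\delta(M)\leq\delta(N)$ by applying Poisson summation inside every $M$-dimensional coordinate subspace of $\R^N$ and then summing the resulting inequalities over all $\binom{N}{M}$ such subspaces. First I would invoke Theorem~\ref{Deltathm}$(ii)$ to pick an extremizer $F$ for $\Delta(N)$, normalized so that $F(\bo 0)=1$ and $\int_{\R^N}F=\Delta(N)$. Applying Proposition~\ref{prop:Poisson} to $F$ and using that $F(\bn)\leq 0$ for every non-zero $\bn\in\Z^N$ (the lattice points lying on $\partial Q_N$ being handled by a shift or approximation), one obtains the identity
\begin{equation*}
1-\Delta(N)\;=\;\sum_{\bn\in\Z^N\setminus\{\bo 0\}}|F(\bn)|.
\end{equation*}

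Next, for each $M$-subset $V\subset\{1,\dots,N\}$, let $F_V:\R^M\to\R$ be the slice obtained from $F$ by freezing every coordinate outside $V$ at $0$. The slice inherits Fourier support in $Q_M$, satisfies $F_V(\bo 0)=1$, and is $\leq 0$ off $Q_M$. Poisson summation applied to $F_V$ in $\R^M$ gives
\begin{equation*}
\int_{\R^M}F_V\;=\;1\;-\;\sum_{\bm\in\Z^M\setminus\{\bo 0\}}|F_V(\bm)|.
\end{equation*}
If $\int F_V>0$ then $F_V$ is $\Delta(M)$-admissible, so the left-hand side is at most $\Delta(M)$; and if $\int F_V\leq 0$ the same bound holds trivially. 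Translating $\bm\in\Z^M$ back to the corresponding $\bn\in\Z^N$ with $\supp(\bn)\subset V$, either case yields
\begin{equation*}
\sum_{\bn\neq\bo 0,\;\supp(\bn)\subset V}|F(\bn)|\;\geq\;1-\Delta(M).
\end{equation*}

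Summing this bound over the $\binom{N}{M}$ choices of $V$ and exchanging the order of summation, a point $\bn\neq\bo 0$ with $|\supp(\bn)|=s(\bn)$ is counted exactly $\binom{N-s(\bn)}{M-s(\bn)}$ times (so it contributes only when $s(\bn)\leq M$). Using the elementary estimate $\binom{N-s}{M-s}\leq\binom{N-1}{M-1}$ valid for all $s\geq 1$, this collapses to
\begin{equation*}
\binom{N-1}{M-1}\sum_{\bn\neq\bo 0}|F(\bn)|\;\geq\;\binom{N}{M}\bigl(1-\Delta(M)\bigr).
\end{equation*}
Substituting the identity from the first step and using $\binom{N-1}{M-1}/\binom{N}{M}=M/N$ reduces this to $M\bigl(1-\Delta(N)\bigr)\geq N\bigl(1-\Delta(M)\bigr)$, which is exactly $\delta(N)\geq\delta(M)$.

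The main obstacle I expect is the technical rigor behind the Poisson-summation step: for Proposition~\ref{prop:Poisson} to apply on every slice one needs $F_V\in PW^1(Q_M)$, and the lattice points sitting on $\partial Q_N$ (or on $\partial Q_M$ after slicing) must be dealt with carefully since the sign condition $F\leq 0$ is only guaranteed strictly outside $Q_N$. Both difficulties can be handled in the familiar way by first replacing $\ft{F}$ with $\ft{F}\cdot\phi_\varepsilon$ for a smooth cutoff $\phi_\varepsilon$ compactly supported in $(-1,1)^N$, verifying the inequality for the resulting approximants (whose slices are genuinely Schwartz and whose Fourier supports miss $\partial Q_N$), and then passing to the limit $\varepsilon\to 0^+$; once this is in place, the combinatorial step above does all the remaining work.
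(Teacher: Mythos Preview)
Your proof is correct and essentially the same as the paper's. The paper first symmetrizes an extremizer $F_N$ under the hyperoctahedral group and then compares the Poisson sum over $\Z^N$ with the Poisson sum for a \emph{single} $M$-dimensional slice, using the orbit-counting inequality $\sigma(\bm,\bo 0)\geq(N/M)\sigma(\bm)$; you instead skip the symmetrization and average over all $\binom{N}{M}$ coordinate slices, using $\binom{N-s}{M-s}\leq\binom{N-1}{M-1}$. These two inequalities are literally the same statement (since $\sigma(\bm,\bo 0)/\sigma(\bm)=\binom{N}{M}/\binom{N-s}{M-s}$), so the difference is purely organizational. Your technical concern about $F_V\in PW^1(Q_M)$ and the boundary lattice points is shared by the paper's argument and is handled the same way: continuity forces $F\leq 0$ on $\partial Q_N$, and the $L^1$ membership of the slice follows from the argument of Lemma~\ref{slice} (or the approximation you sketch).
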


\begin{proof}
For a given $\bn \in \Z^N$ let $\sigma(\bn)$ denote the quantity of distinct numbers in $\Z^N$ that can be constructed by only permuting the entries in $\bn$. It is simple to see that if $M<N$, $\bm\in \Z^M$ is non-zero and $(\bm, \bo 0) \in\Z^N$ then
$$
\sigma(\bm, \bo 0) \geq (N/M) \sigma(\bm),
$$
and equality is attained if $\bm$ has only one entry different than zero. Let $\Gamma^N$ be the subset of { non-zero} $\bn =(n_1,...,n_N)\in \Z_+^N$ such that $n_1\geq n_2 \geq ...\geq n_N\geq 0$ ($\Z_+=\{0,1,2,...\}$). Note that $(\Gamma^M,\bo 0)\subset \Gamma^N$ if $M<N$. Also let $\ep(\bn)$ be the number of non-zero entries in a vector $\bn\in\Gamma^N$. Now, for a given $N$, let $F_N(\bx)$ be an extremal function for the $\Delta(N)$ problem. We can assume that it is invariant under the symmetries of $Q_N$. Define $G_N(\by)=F_N(\by,\bo 0)$ for every $\by\in\R^M$, $M<N$. By Poisson summation we obtain
\est{
\Delta(N) =\ft F_N(\bo 0) & = 1+ \sum_{\bn\in\Gamma^N} 2^{\ep(\bn)}\sigma(\bn)F_N(\bn) \\ & \leq 1+ \sum_{\bm\in\Gamma^M} 2^{\ep(\bm,\bo 0)}\sigma(\bm,\bo 0)F_N(\bm,\bo 0) \\ & = 1 +  \sum_{\bm\in\Gamma^M} 2^{\ep(\bm)}\sigma(\bm,\bo 0) G_N(\bm) \\
& \leq 1 + (N/M)\sum_{\bm\in\Gamma^M} 2^{\ep(\bm)}\sigma(\bm) G_N(\bm) = 1 + (N/M)(\ft G_N(0)-1) \\ & \leq 1 + (N/M)(\Delta(M)-1).
}
We conclude that
$$
\frac{1-\Delta(N)}{N} \geq \frac{1-\Delta(M)}{M},
$$
and this finishes the lemma.
\end{proof}
\begin{proof}[Proof of Theorem \ref{Deltathm} continued]
The previous lemma implies that if $\Delta(N)>0$ then
$$
\frac{1}{N} > \frac{1-\Delta(N)}{N}=\delta(N)\geq \delta(k) = \frac{1-\Delta(k)}{k},
$$
for any $k\leq N$. We conclude that
$$
\frac{k}{1-\Delta(k)}>N,
$$
and this finishes the proof of the theorem.
\end{proof}
\smallskip

\noindent {\bf Remark.} Let $k<N\leq N_c$, then
$$
\Delta(k) \geq (1-k/N) + (k/N)\Delta(N).
$$
Since the right hand side above is always larger than $\Delta(N)$, this inequality produces better lower bounds for lower dimensions once a lower bound is given for a higher dimension.

\section{Linear Programming Bounds}
In this section we'll solve the linear program in Theorem \ref{nuLPbound} to calculate explicit upper bounds for $\nu(N)$ and $\Delta(N)$ for $N=2,3,4,5$. First, we will exploit the symmetry of the problem to make the linear program in Theorem \ref{nuLPbound} less computationally expensive. We'll then describe our strategy for computing upper bounds via this new linear program. Finally, we describe how to modify this strategy to derive rigorous bounds.

Let $R\in \Z_{>1}$. For $\bo{x}=(x_1,\dots,x_N)\in\R^N$, define $\cos(\bo{x}) := \prod_{i=1}^N \cos(x_i)$. Let $\A := \{ \bo{x} \in \R^N : 0 < x_1 < \dots < x_N < R\}$. Let $\Sigma(Q_N)$ denote the group of symmetries of the unit cube. Note that this group has a natural action on $\R^N$ of permuting coordinates and switching signs. The orbit of any point in $\A$ under $\Sigma(Q_N)$ has cardinality $2^N N!$. Let $S_N$ be the symmetric group on $N$ elements. For $\sigma\in S_N$ and $\bo x = \left( x_1, \dots, x_N\right) \in \R^N$ we write $\sigma(\bo x):= \left(x_{\sigma(1)}, \dots,x_{\sigma(N)}\right)$, i.e. we let $\sigma$ act on the indices of the components of $\bo x$. For $x,y\in\R^N$ we write $xy$ to mean the component-wise product of $x$ and $y$; we'll use $\langle x , y\rangle$ to mean the scalar product.

Note that we solved the linear program with CVX (using the Gurobi solver) in MATLAB. The rational arithmetic was done in Maple. The code will be made available before publication on the first author's website.

\subsection{Reducing the size of the linear program} 
Recall that Theorem \ref{nuLPbound} gives a linear program in which we want to optimize a set of weights $\omega_1,\dots,\omega_L$, where each weight $\omega_i$ corresponds to shifting the periodic summation of the extremal function $F$ by a single point $\bo y_i$ in $\R^N$. If instead we let each weight $\omega_i$ correspond to the shifts by each of the points in $\Sigma(\bo y_i):= \{ \bo z : \sigma(\bo y_i) = z \text{ for some } \sigma \in \Sigma(Q_n)\}$, then we can exploit the symmetry of the problem to get the following simplification of Theorem \ref{nuLPbound}.

\begin{theorem}\label{cosLP}
Let $\bo{y}_0 = 0\in \R^N$ and suppose $\bo{y}_1,\dots, \bo{y}_L \in \A$, $\omega_0,\dots,\omega_L \in \R^{\ge 0}$ are such that
\begin{enumerate}
\item{$\omega_0 + \sum_{i=1}^L \omega_i 2^N \sum_{\sigma\in S_N}\cos\left(\frac{2\pi \sigma\left(\bo y_i\right) \bo{n}}{N}\right) =0$ for all $n \in \A \cap \Z^N$ such that $0 < || n||_\infty < R$}
\item{$\omega_0 + \sum_{i=1}^L \omega_i 2^N N!= R^N$}
\end{enumerate}
Then
\[
\nu(k) \le \omega_0 + \underset{\{1\le i \le L : ||\bo{y}_i||_\infty< 1\}} \sum \omega_i 2^N N!
\]
and, if $||y_i||_\infty \ge 1$ for all $ 0<i\le L$, then
\[
\Delta(k) \le \omega_0
\]
\end{theorem}

\subsection{A simple algorithm}
In order to use Theorem \ref{cosLP} to compute explicit bounds, we first fix values of $N$ and $R$ and generate a large number of random points $\bo{y}_i \in \A$. Then we solve the linear program in Theorem \ref{cosLP}, store the values of $\bo{y}_i$ for which $w_i > 0$ (solutions are very sparse due to the relatively small number of constraints), generate a large number of new points and add them to the collection of values, and then repeat this process until the upper bound appears to stabilize.

For larger values of $N$ and $R$ sometimes this method is not good at finding a feasible value of $w$ until $L$ is very large. In this case one can speed things up by first solving the problem by taking the $\bo{y}$'s to belong to the lattice $(1/S)\Z^N$ for some value of $S>R$; in our experience this always gives a feasible value of $w$ and then the bound can be improved by remembering the nonzero entries of $w$ and the corresponding values of $\bo y$, generating a random set of $\bo y$'s, and iterating as above.

In our experience when we solve the linear program all of the nonzero values of $w_i$ correspond to points $\bo y_i$ which satisfy $|| y_i ||_\infty > 1$. So, in our experience, this method gives the same bounds on $\Delta(N)$ and $\nu(N)$.

We summarize the upper bounds for $\Delta(N)$ in the following table.

 \begin{table}[h!]
 \label{table:data}
 \title{\bf Upper bounds for $\Delta(N)$.}
 \bigskip
 
\begin{tabular}{|c|}
\hline
0.9946333 ($N=2$, $R=16$, $S=5$) \\ \hline 0.9849928 ($N=3$, $R=10$, $S=5$) \\ \hline 0.9802947 ($N=4$, $R=5$, $S=6$) \\ \hline 0.9553936 ($N=5$, $R=7$, $S=4$) \\ \hline
\end{tabular}
\bigskip
\end{table}

\subsection{Making the bounds rigorous}
Note that the bounds in the previous section were obtained using floating point arithmetic and therefore are not rigorous (the upper bound $\Delta(5)\leq 0.9553936$, if correct within $2$ significant digits, would produce $N_c\leq 125$). In this section we will describe how to use rational arithmetic to remedy this. We carry out the computations only for the case $N=2$, $R=6$ and leave the other cases for future work. In this case we are able to get a rigorous upper bound of \footnote{In fact, we get a rigorous upper bound of a rational number slightly smaller than this, but it has too many digits to fit in this paper. The interested reader can find it in the Maple script on the first author's website.}
$$
\Delta(2)\leq 0.997212,
$$
and  this gives a rigorous upper bound of 
$$
N_c \leq 717.
$$
Notice that the linear program in Theorem \ref{cosLP} can be made rational if we choose values of $\bo y_i$ such that $\cos\left(\frac{2\pi n \bo y_i}{R}\right)$ is rational for all values of $n\in \Z^N$. This will happen if and only if $C_i:= \cos\left(\frac{2\pi \bo{y_i}}{R}\right)$ and $S_i:=\sin\left(\frac{2\pi\bo{y_i}}{R}\right)$ are both rational numbers. Moreover, since we don't actually need the values of $\bo y_i$ to solve the linear program, just the values of $\cos$, instead of generating random points $\bo y_i$ we can generate random rational points on the circle, or equivalently random Pythagorean triples. These will be the values of $S_i$ and $C_i$.

Rather than solve the linear program using rational arithmetic, which is computationally expensive, we solve the problem using floating point arithmetic to identify the nonzero entries of $w_i$ and then use rational arithmetic to solve the resulting linear system. In our experience the number of nonzero entries is always the same as the number of equality constraints. Therefore the only rational arithmetic we have to do is solving a relatively small full rank square linear system.

\section{Explicit Minorants in Low dimensions}\label{conMin}
We define an auxiliary variational quantity $\lambda(N)$ over a more restrictive set of admissible functions than $\nu(N)$. Let 
	\[
		\lambda(N)=\sup \dint_{\Rn}F(\bx)d\bx
	\]	
where the supremum is taken over functions $F(\bx)$ that are admissible for $\nu(N)$ and, in addition, $F({\bo 0})=1$, and 
	\[
		F(\bn)=0
	\]
for each non-zero $\bn\in\Z^N$ unless $\bn$ is a ``corner" of the box $Q_{N}$. Here, a corner of the box $Q_N$ is a vector $\bn\in \partial Q_N\cap \Z^N$ with at least $2$ non-zero entries. This definition makes any $k$-dimensional slice of an admissible function for $\lambda(N)$ ($k<N$) admissible for $\lambda(k)$, which in turn implies that
\[
\lambda(N+1)\leq \lambda(N)
\]
for all $N$. We note that Selberg's functions (see Appendix) are always admissible for $\lambda(N)$ but have negative integral. Our aim is to mimick Selberg's construction but to incorporate a correction term so that our minorants have positive integral. Notice that by Theorem \ref{nuTheorem} it is impossible to do this in sufficiently high dimensions.

Making use of the interpolation formula \eqref{int-form-gen} we conclude that every function $F(\bx)$ admissible for $\lambda(N)$ has the following useful representation
\begin{equation}\label{simple-form}
F(\bx) = S(\bx) P(\bx)
\end{equation}
where
$$
S(\bx) = \prod_{n=1}^N \bigg(\frac{\sin(\pi x_n)}{\pi x_n(x_n^2-1)}\bigg)^2
$$
and $P(\bx)$ is a polynomial such that each variable $x_n$ appearing in its expression has an exponent not greater than $4$. Notice that, by Poisson summation, if $F(\bx)$ is admissible for $\lambda(N)$ and is invariant under the symmetries of $Q_N$ then 
	\begin{equation}\label{cornerIntegral}
		\dint_{\Rn}F(\bx)d\bx= 1+ \sum_{k=2}^N \binom N k 2^{k}P(\bo u_k)
	\end{equation}  
where $ \bo u_k=(\overbrace{1,1,1,..,1}^{\text{k times}},0,...,0)$. 

In what follows it will be useful to use a particular family of symmetric functions. For given integers $N\geq k \geq 1$ we  define
\begin{equation*}
\sigma_{N,k}(\bx) = \sum_{1\leq n_1<n_2<...<n_k\leq N} x_{n_1}^2x_{n_2}^2...x_{n_k}^2
\end{equation*}
and
\begin{equation*}
\wt \sigma_{N,k}(\bx) = \sum_{1\leq n_1<n_2<...<n_k\leq N} x_{n_1}^4x_{n_2}^4...x_{n_k}^4.
\end{equation*}

\begin{lemma}\label{general_construction_lowerbound}
Let $F_N : \R^N\to \R$ be $\lambda(N)$-admissible function constructed using \eqref{simple-form} with 
$$
P_N(\bx) = \prod_{n=1}^{N} (1-x_n^2) - \sum_{k=1}^N a_k\sigma_{N,k}(\bx) - \sum_{k=1}^N b_k\wt \sigma_{N,k}(\bx)
$$
and $a_k,b_k\geq 0$ for $k=1,...,N$. Let $\by$ denote vectors in $\R^{N+1}$. If $N$ is even then the function $F_{N+1}:\R^{N+1}\to \R$ constructed using \eqref{simple-form} with
$$
P_{N+1}(\by) = \prod_{n=1}^{N+1} (1-y_n^2) - \sum_{k=1}^N a_k\sigma_{N+1,k}(\by) - \sum_{k=1}^N b_k\wt \sigma_{N+1,k}(\by)
$$
is $\lambda(N+1)$-admissible. If $N$ is odd then the function $F_{N+1}:\R^{N+1}\to \R$ constructed using \eqref{simple-form} with
$$
P_{N+1}(\by) = \prod_{n=1}^{N+1} (1-y_n^2) - \sum_{k=1}^N a_k\sigma_{N+1,k}(\by) - \sum_{k=1}^N b_k\wt \sigma_{N+1,k}(\by) - \delta\sigma_{N+1,N+1}(\by),
$$
where $\delta\geq 0$ and
$$
\delta\geq \sup_{|y_n|\geq 1} \frac{ \prod_{n=1}^{N+1} (y_n^2-1) - \sum_{k=1}^N a_k\sigma_{N+1,k}(\by) - \sum_{k=1}^N b_k\wt \sigma_{N+1,k}(\by)}{\sigma_{N+1,N+1}(\by)}
$$
is a $\lambda(N+1)$-admissible function.
\end{lemma}

\begin{proof}
Assume $N$ is even. If all $y_1,...,y_{N+1}$ have moduli less than $1$ then trivially  $P_{N+1}(\by)\leq 1$. If an even number of the variables $y_1,...,y_{N+1}$ have moduli less than $1$, then also clearly $P_{N+1}(\by)\leq 0$.  If an odd number, but not all, have moduli less than $1$, assume for instance $|y_1|< 1$, then we would have  $P_{N+1}(\by)\leq P_{N}(y_2,...,y_{N+1}) \leq 0$.

Assume now that $N$ is odd. Again, if all $y_1,...,y_{N+1}$ have moduli less than $1$ then trivially  $P_{N+1}(\by)\leq 1$. If an odd number of the variables $y_1,...,y_{N+1}$ have moduli less than $1$, then also clearly $P_{N+1}(\by)\leq 0$. If an even number of variables, not non of them, have moduli less than $1$, say $|y_1|<1$, then $P_{N+1}(\by) \leq P_N(y_2,...,y_{N+1}) \leq 0$. If all variables have moduli greater than $1$ then by the choice of $\delta$ we have $P_{N+1}(\by)\leq 0$.
\end{proof}

Note that $\delta=1$ always work, but that is often not the best choice since we want to minimize $\delta$ so to make $\ft F_{N+1}(\bo 0)$ as large as possible, hence this forces the coefficients $b_k$ being not too small. Also note that in this way $P_N(\bx)=P_{N+1}(\bx,0)$. Using the above lemma we were able to construct admissible functions up to dimension $N=5$ by starting with a good two dimensional minorant.

\begin{theorem}\label{construction-lower-bound}
Define the functions $\F_2(x_1,x_2)$, $\F_3(x_1,x_2,x_3)$, $\F_4(x_1,...,x_4)$ and $\F_5(x_1,...,x_5)$ by using representation \eqref{simple-form} and the following polynomials respectively :
\begin{itemize}
\item $P_2(x_1,x_2)=(1-x_{1}^2)(1-x_{2}^2)-\tfrac{1}{16}\wt \sigma_{2,2}(x_1,x_2)$
\smallskip

\item $P_3(x_1,x_2,x_3)= \dprod_{n=1}^{3}(1-x_{n}^2)- \tfrac{1}{16}\wt \sigma_{3,2}(x_1,x_2,x_3)$
\smallskip

\item $P_4(x_1,...,x_4)= \dprod_{n=1}^{4}(1-x_{n}^2)-\tfrac{3}{4}\sigma_{4,4}(x_1,...,x_4) -  \tfrac{1}{16}\wt \sigma_{4,2}(x_1,...,x_4)$
\smallskip

\item $P_5(x_1,...,x_5)= \dprod_{n=1}^{5}(1-x_{n}^2)-\tfrac{3}{4}\sigma_{5,4}(x_1,...,x_5) -  \tfrac{1}{16}\wt \sigma_{5,2}(x_1,...,x_5).$
\end{itemize}
These functions are admissible for $\lambda(2)$, $\lambda(3)$, $\lambda(4)$ and $\lambda(5)$ respectively and their respective integrals are equal to: $63/64 = 0.984375$, $119/128=0.9296875$, $95/128=0.7421975$ and $31/256=0.12109375$.

\end{theorem}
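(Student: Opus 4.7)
The plan is to verify, for each $N \in \{2, 3, 4, 5\}$, the four defining conditions of $\lambda(N)$-admissibility of $\F_N = S \cdot P_N$: (a) $\F_N \in PW^1(Q_N)$; (b) the interpolation conditions at $\Z^N$; (c) the minorant inequality $\F_N \le {\bf 1}_{Q_N}$; and then (d) to evaluate the integral via the Poisson identity \eqref{cornerIntegral}. Each polynomial $P_N$ is manifestly invariant under coordinate permutations and sign flips, so this symmetry descends to $\F_N$ and \eqref{cornerIntegral} applies.

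For (a), a partial-fraction decomposition writes the one-dimensional factor $s(x) = (\sin(\pi x)/[\pi x(x^2-1)])^2$ as $g(x)^2$ with $g(x) = -\operatorname{sinc}(x) - \tfrac{1}{2}\operatorname{sinc}(x-1) - \tfrac{1}{2}\operatorname{sinc}(x+1)$, where $\operatorname{sinc}(y) = \sin(\pi y)/(\pi y)$. Hence $\hat g(\xi) = -2\cos^2(\pi \xi)\,{\bf 1}_{[-1/2, 1/2]}(\xi)$ and $\hat s = \hat g * \hat g$ is supported in $[-1, 1]$; multiplying $S = \prod_n s(x_n)$ by the polynomial $P_N$ preserves spectral support in $Q_N$, and the decay $s(x) = O(x^{-6})$ at infinity absorbs the degree-$4$-per-variable growth of $P_N$, yielding $\F_N \in L^1(\R^N) \cap PW^2(Q_N) = PW^1(Q_N)$. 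For (b), the values $s(0) = 1$, $s(\pm 1) = 1/4$, and $s(n) = 0$ for integer $|n| \ge 2$ immediately give $S(\bn) = 0$ at every lattice point with an entry of absolute value at least $2$, while $P_N$ vanishes at points with exactly one nonzero entry (since every monomial of $P_N - \prod_n(1-x_n^2)$ involves at least two coordinates and $(1-1) = 0$ in the product); the value $\F_N(\bo 0) = P_N(\bo 0) = 1$ is then immediate.

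The central obstacle is (c). Inside $Q_N$, the standard bound $|g(x)| \le \|\hat g\|_{L^1} = 1$ gives $s \le 1$ on all of $\R$, and $P_N \le \prod_n(1 - x_n^2) \le 1$ on $Q_N$ because the corrections $\sigma_{N,4}$ and $\wt\sigma_{N,2}$ are non-negative. Outside $Q_N$ we use $S \geq 0$ and reduce to proving $P_N(\bx) \le 0$. Setting $k = \#\{i : |x_i| > 1\} \ge 1$, the product $\prod_n(1-x_n^2)$ has sign $(-1)^k$ times a non-negative factor; for odd $k$ the corrections being non-negative then force $P_N \le 0$ trivially. The non-trivial cases are $k = 2$ (for all four $N$) and $k = 4$ (for $N = 4, 5$). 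The base case is the two-variable estimate
\begin{equation*}
(u - 1)(v - 1) \le \tfrac{1}{16}\, u^2 v^2, \qquad u, v \ge 1,
\end{equation*}
which follows from AM-GM $(u-1)(v-1) \le (\sqrt{uv} - 1)^2$ combined with the elementary rearrangement $(t - 1)^2 \le t^4/16 \iff (t - 2)^2 \ge 0$, saturated precisely at $u = v = 2$. For $k = 2$ in dimensions $N \ge 3$, the remaining factors $(1 - x_j^2) \in [0, 1]$ only shrink $\prod_n(1 - x_n^2)$, and the base case on the two coordinates in $\{i : |x_i| > 1\}$ is already absorbed by the single $\tfrac{1}{16} x_i^4 x_j^4$ summand inside $\tfrac{1}{16}\wt\sigma_{N,2}$. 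For $k = 4$, AM-GM on the six products $x_i^4 x_j^4$ (whose product equals $(x_1 x_2 x_3 x_4)^{12}$) yields $\tfrac{1}{16}\sum_{i < j} x_i^4 x_j^4 \ge \tfrac{3}{8}\prod_i x_i^2$; adding the $\tfrac{3}{4}\prod_i x_i^2$ contribution from $\tfrac{3}{4}\sigma_{N,4}$ produces a total correction at least $\tfrac{9}{8}\prod_i x_i^2$, which strictly dominates $\prod_i(x_i^2 - 1) \ge \prod_n(1 - x_n^2)$.

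Finally for (d), since $\F_N(\bn) = 2^{-2k} P_N(\bo u_k)$ at any corner with $k$ nonzero entries, Poisson summation yields exactly \eqref{cornerIntegral}. A short monomial count gives $P_N(1, 1, 0, \ldots, 0) = -1/16$ and $P_N(1, 1, 1, 0, \ldots, 0) = -3/16$ whenever these make sense, $P_4(1, 1, 1, 1) = -9/8$, and $P_5(1, 1, 1, 1, 1) = -35/8$; substituting into \eqref{cornerIntegral} delivers the stated integrals $63/64$, $119/128$, $95/128$, and $31/256$. The polynomial inequality $P_N \le 0$ outside $Q_N$ is the only genuinely delicate step and the reason this explicit strategy is hardest to push past $N = 5$.
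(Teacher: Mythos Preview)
Your proof is correct and follows the same overall architecture as the paper: reduce admissibility to showing $P_N \le 0$ outside $Q_N$, split according to $k = \#\{i : |x_i| > 1\}$, and compute the integral via \eqref{cornerIntegral}. The $k=2$ case is handled exactly as in the paper, by reducing to the two-variable inequality $(x_1^2-1)(x_2^2-1) \le \tfrac{1}{16}x_1^4 x_2^4$; the paper reaches this via the substitution $t = |x_1 x_2|$ and the explicit factorization $1 + t^2 - 2t - t^4/16 = (t-2)^2(4-4t-t^2)/16$, while you use the AM--GM step $(u-1)(v-1) \le (\sqrt{uv}-1)^2$ first. One small wording issue: your claim ``$(t-1)^2 \le t^4/16 \iff (t-2)^2 \ge 0$'' is not a literal equivalence for all real $t$ (it fails at $t=0$); what you are actually using is the factorization $t^4 - 16(t-1)^2 = (t-2)^2(t^2+4t-4)$, which is non-negative for $t \ge 1$.

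Where your argument genuinely diverges from the paper is the $k=4$ case. The paper expands $\prod_n(1-x_n^2)$ in elementary symmetric functions, invokes the special inequality $\sigma_{4,2} - \sigma_{4,3} \le x_1^2 x_2^2 + x_3^2 x_4^2$, and then bounds the result by $-1 + P_2(x_1,x_2) + P_2(x_3,x_4)$ plus residual cross terms that are controlled separately. Your route is shorter: the AM--GM bound $\tfrac{1}{16}\sum_{1\le i<j\le 4} x_i^4 x_j^4 \ge \tfrac{3}{8}\prod_{i} x_i^2$ combines with the $\tfrac{3}{4}\sigma_{N,4}$ term to give a total correction of at least $\tfrac{9}{8}\prod_i x_i^2$, and then $\prod_i(x_i^2-1) \le \prod_i x_i^2$ finishes immediately. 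This avoids the reduction to two copies of $P_2$ and makes it more transparent why the extra $\tfrac{3}{4}\sigma_{N,4}$ term is precisely what is needed; the paper's argument, on the other hand, keeps the connection to the base case $P_2$ visible throughout.
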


\begin{proof}
The integrals of these functions can be easily calculated using formula \eqref{cornerIntegral}, we prove only their admissibility. We start with $\F_2(\bx)$. Clearly, if $|x_1|>1>|x_2|$ then $P_2(x_1,x_2)<0$. Also, writing $t=|x_1x_2|$ we obtain
\begin{align*}
P_2(x_1,x_2) & = 1+x_1^2x_2^2-x_1^2-x_2^2-x_1^4x_2^4/16
 \\ & \leq 1+x_1^2x_2^2-2|x_1x_2|-x_1^4x_2^4/16
 \\ & = 1+t^2-2t-t^4/16.
\end{align*}
On the other hand, we have
\es{\label{eq-30}
1+t^2-2t-t^4/16 = (1-t)^2 -t^4/16
}
and
\es{\label{eq-31}
1+t^2-2t-t^4/16 = (t-2)^2(4-4t-t^2)/16.
}
If $|x_1|,|x_2|<1$ then $0\leq t < 1$, and by \eqref{eq-30} we deduce that $P_2(x_1,x_2)<1$. If $|x_1|,|x_2|>1$ then $t>1$, and by \eqref{eq-31} we deduce that $P_2(x_1,x_2)\leq 0$. This proves that $\F_2(\bx)$ is $\lambda(2)$-admissible. Lemma \ref{general_construction_lowerbound} shows that $\F_3(\bx)$ is admissible for $\lambda(3)$.

We now deal with $\F_4(\bx)$, which from the proof of Lemma \eqref{general_construction_lowerbound} we only need to worry when $|x_1|,|x_2|,|x_3|,|x_4|>1$. In this case, suppressing the variables, we have
$$
P_4 = 1-\sigma_{4,1} + \sigma_{4,2} - \sigma_{4,3} + \tfrac{1}{4}\sigma_{4,4} - \tfrac{1}{16}\wt \sigma_{4,2}.
$$
Observing that
$$
\sigma_{4,2} - \sigma_{4,3} \leq x_1^2x_2^2 + x_3^2x_4^2,
$$
we obtain
\begin{align*}
1-\sigma_{4,1} + \sigma_{4,2} - \sigma_{4,3}  - \tfrac{1}{16}\wt \sigma_{4,2} 
\leq  &  -1 + P_2(x_1,x_2)  +  P_2(x_3,x_4) \\ & - \tfrac{1}{16}[x_1^4x_3^4+x_1^4x_4^4+x_2^4x_3^4+x_2^4x_4^4].
\end{align*}
Since $P_2(x_1,x_2)\leq 0$ and $P_2(x_3,x_4)\leq 0$, we deduce that
\begin{align*}
P_4(x_1,...,x_4) & \leq -1 + \tfrac{1}{4}x_1^2x_2^2x_3^2x_4^2 - \tfrac{1}{16}[x_1^4x_3^4+x_1^4x_4^4+x_2^4x_3^4+x_2^4x_4^4]
\\ & \leq -1 + \tfrac{1}{16}(x_1^4+x_2^4)(x_3^4+x_4^4) - \tfrac{1}{16}[x_1^4x_3^4+x_1^4x_4^4+x_2^4x_3^4+x_2^4x_4^4]
\\ & = -1.
\end{align*}
This proves that $\F_4(\bx)$ is admissible for $\lambda(4)$. Lemma \ref{general_construction_lowerbound} shows  that $\F_5(\bx)$ is admissible for $\lambda(5)$.
\end{proof}

\section{An Application to Diophantine Inequalities}\label{BMV_section}
 Let $\|x\|$ be the distance from the real number $x$ to the nearest integer $k$ and let $[x]=k$. The following is \cite[Corollary 2]{BMV}.

\begin{theorem}[Barton-Montgomery-Vaaler]
Let $\ep_n\in (0,1/2]$ for $n=1,...,N$ and let $\bxi_{m}=(\xi_{m,1},...,\xi_{m,N})\in \R^N/\Z^N$ be vectors for $m=1,...,M$  such that 
$$
\max_{n=1,...,N} \frac{\| \xi_{m,n}\|}{\ep_n}\geq 1
$$
for each $m=1,...,M$. Let $\LL = \{\bl \in \Z^N : |\ep \ell_n| < N , \ n=1,...,N\}$. Then 
		\[
			\frac{1}{3}M \leq \underset{{\bn \neq \bo 0}}{\dsum_{\bn\in \LL}} \left| \dsum_{m=1}^{M}e(\bn\cdot\bxi_{m})  \right|
		\]
\end{theorem}

By using the $\nu(N)$-admissible minorants for $N=1,2,3,4,5$ constructed in Section \ref{conMin} we can improve the above estimate by reducing the size of $\LL$. 
	
\begin{proof} [{\bf Proof of Theorem \ref{BMV_improved_thm}}]
If $\bu$ and $\bv$ are two vectors in $\R^N$ we write \\ $\bu \bv = (u_1v_1,u_2v_2,....,u_Nv_N)$ (recall that  we write inner products using a central dot). We also write $\bu /\bv =(u_1/v_1,u_2/v_2,....,u_N/v_N)$ if all entries of $\bv$ are non-zero. Let $\bo \ep = (\ep_1,...,\ep_n)$, let $U=\Z^N+ \prod_{n=1}^N (-\ep_n,\ep_n)$ and define
$$
\Psi(\bx)=\sum_{\bn \in \Z^N} F((\bn + \bx)/{\bo \ep}).
$$
By Poisson summation we have
		\[
			\Psi(\bx)=\ep_1\ep_2...\ep_N \sum_{\bn\in \Z^N} \ft{F}({\bo \ep} \bn)e(\bn\cdot \bx) = \ep_1\ep_2...\ep_N \sum_{\bn\in \wt \LL} \ft{F}({\bo \ep} \bn)e(\bn\cdot \bx) 
		\]
Clearly, $\Psi(\bx)$ is a minorant of the indicator function of  $U$ and thus
		\[
			0=\dsum_{m=1}^{M} {\bo 1}_{U}(\bxi_{m}) \geq \dsum_{m=1}^{M}  \Psi(\bxi_{m}).
		\]
We obtain
		\[
		0\geq 	(\ep_1\ep_2...\ep_N )^{-1}\dsum_{m=1}^{M}  \Psi(\bxi_{m})= \ft{F}(\bo 0)M + \sum_{\substack{\bn \in \wt \LL\\ \bn \neq \bo 0}}\ft{F}(\bo \ep \bn) \dsum_{m=1}^{M}e(\bn\cdot \bxi_{m}).
		\]
	The proof is complete upon rearranging terms and applying the triangle inequality.
\end{proof}

\noindent {\bf Remark.} A perhaps more useful and easy to remember inequality that now holds for $N=1,2,3,4$ or $5$ is
	$$
			c_N M \leq \underset{{0<\| \bn\|_{\infty}\leq \ep^{-1}}}{\dsum_{\bn\in\Z^N}} \left| \dsum_{m=1}^{M}e(\bn\cdot\bxi_{m})  \right|,
$$
		if for some fixed $\ep\in(0,1/2]$ the $\ell^\infty$ distance of each $\bo \xi_m$ to the nearest point in $\Z^N$ is at least $\ep$.  The constant $c_N$ only depends on the dimension $N$.		
	
\begin{conjecture}
		If $F(\bx)$ is extremal for the $\nu(N)$-problem then it satisfies $\ft F(\bo 0)=\|\ft F\|_\infty$. In particular, inequality \eqref{BMV_imporoved_ineq} holds for all $N\leq N_c$, where $N_c$ is the critical dimension for the $\nu(N)$-problem.
		\end{conjecture}
		
		We note that we have verified numerically that the functions defined in Section \ref{conMin} do not have Fourier transform with an absolute maximum at the origin. On the other hand they are probably not extremal. The absolute maximum however is very close to the origin. As an extreme example, we have 
\begin{align*}
& \ft \F_2(x_1,x_2) =  \\
&  \left(1 - x_1 + \frac{\sin(2 \pi x_1)}{2 \pi}\right) \left(1 - x_2 +
    \frac{\sin(2 \pi x_2)}{2 \pi}\right) \\
& \frac{-1}{256\pi^2}\left(-2 \pi (1 - x_1) \cos(2 \pi x_1) +
     \sin(2 \pi x_1)\right) \left(-2 \pi (1 - x_2) \cos(2 \pi x_2) +
     \sin(2 \pi x_2) \right),
\end{align*}
if $0<x_1,x_2<1$. Recall that $\ft \F_2$ is supported in $Q_2$ and is invariant under the symmetries of $Q_2$. This function is non-negative and its maximum is attained at \\ $(x_1,x_2)=(0.050626...,0.050626...)$ with a value of $0,9869...$, which is larger than $\ft \F_2(0,0)=63/64=0.9843...$. This would produce a constant $c_2=0.997381...$.


\section*{Appendix: Selberg and Montomgery's Constructions}

In this appendix we will present the box minorant constructions of Selberg and Montgomery and we will preform some asymptotic analysis on their integrals. In particular we will show in which regimes Selberg's minorant is a better approximate than Montgomery's and visa-versa. The interested readers are encouraged to consult \cite{HKW,S,V} for more on Selberg's functions and \cite{BMV,MR947641} for more on Montgomery's functions. Our treatment is by no means exhaustive.

Both constructions begin with the following entire functions
\[
 K(z)=\left(\frac{\sin\pi z}{\pi z}\right)^2
 \]
and
\[
H(z)=\left\lbrace \frac{\sin^2 \pi z}{\pi^2}\right\rbrace\left(  \sum_{n=-\infty}^{\infty}\frac{\sgn(n)}{(z-n)^2} +\frac{2}{z}\right)
\]
where 
\[
\sgn(x)=\begin{cases}
  1 & \text{ if } x>0 \\
  0 & \text{ if } x=0 \\
  -1 & \text{ if } x<0.
\end{cases}
\]

Let $[a_1,b_1],...,[a_N,b_N]\subset \R$ where $b_n>a_n$ for each $n=1,...,N$, and set $B=\prod [a_i,b_i]$. For each $i=1,...,N$ define
\begin{eqnarray*}
V_i(z)  &=& \tfrac{1}{2}H(z-a_i)+\tfrac{1}{2}H(b_i-z) \\
E_i(z)  &=& \tfrac{1}{2}K(z-a_i)+\tfrac{1}{2}K(b_i-z) \\
C_i(z) &=& V_i(z)+E_i(z) \\
c_i(z)  &=& V_i(z)-E_i(z).
\end{eqnarray*}
The following theorem can be deduced from \cite{HKW,S,V}.
\begin{theorem}[Selberg]\label{SelbergMinorantThm}
The function 
	\[ \mathscr{C}_B(\bx)=-(N-1)\prod_{i=1}^{N}C_{i}(x)+\sum_{n=1}^{N}c_{n}(x)\prod_{m\neq n} C_{m}(x)\]
satisfies:
\begin{enumerate}[(i)]
\item $\ft {\mathscr{C}}_B(\bxi)=0$ for each $\|\bxi\|_{\infty}>1$;
\item $\mathscr{C}_B \leq {\bo 1}_{B}(\bx)$ for each $\bx\in\R^N$; and
\item \begin{eqnarray*}
 	\int_{\R^N}\mathscr{C}_B(\bx) d\bx&=&-(N-1)\prod_{i=1}^{N}(b_i-a_i +1) \\&&+\sum_{n=1}^{N}(b_n-a_n -1)\prod_{m\neq n} (b_m -a_m +1).
	\end{eqnarray*}
\end{enumerate}
\end{theorem}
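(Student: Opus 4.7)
The plan is to establish the three claims (i), (ii), (iii) of the theorem separately, the bulk of the work being the minorant inequality (ii). Throughout I would take from \cite{S,V} the one-dimensional facts that the Fourier transforms of $H$ and $K$ are supported in $[-1,1]$, that Selberg's functions satisfy $0\le c_i(x)\le \mathbf{1}_{[a_i,b_i]}(x)\le C_i(x)$ pointwise (in particular $C_i\ge 0$), and that $\int_\R c_i(x)\,dx = b_i-a_i-1$, $\int_\R C_i(x)\,dx = b_i - a_i + 1$.

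For (i), each $C_i(x_i)$ (resp.\ $c_i(x_i)$), regarded as a function on $\R^N$, has distributional Fourier transform supported in the slab $\{\bxi\in\R^N : |\xi_i|\le 1\}$. The product $\prod_i C_i(x_i)$ is a tensor product of functions in distinct variables, so its Fourier transform is a tensor product of one-dimensional distributions and is therefore supported in $Q_N$; the same is true of each term $c_n(x_n)\prod_{m\neq n}C_m(x_m)$. A finite linear combination preserves this support, giving (i). For (iii), Fubini and the one-dimensional integrals immediately give
\begin{equation*}
\int_{\R^N}\mathscr{C}_B(\bx)\,d\bx \;=\; -(N-1)\prod_i (b_i-a_i+1) \;+\; \sum_n (b_n-a_n-1)\prod_{m\neq n}(b_m-a_m+1).
\end{equation*}

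For the central claim (ii), I would set $f_i(\bx)=\mathbf{1}_{[a_i,b_i]}(x_i)$ so that $\mathbf{1}_B=\prod_i f_i$, and exploit the telescoping identity
\begin{equation*}
\prod_{i=1}^{N} C_i \;-\; \prod_{i=1}^{N} f_i \;=\; \sum_{k=1}^{N}\Bigl(\prod_{m<k} f_m\Bigr)(C_k-f_k)\Bigl(\prod_{m>k} C_m\Bigr).
\end{equation*}
Because $0\le f_m\le C_m$ and $0\le C_k-f_k\le C_k-c_k$, each summand is bounded above by $(C_k-c_k)\prod_{m\neq k} C_m$. Since $\sum_k C_k\prod_{m\neq k}C_m = N\prod_i C_i$, this yields
\begin{equation*}
\prod_i C_i - \prod_i f_i \;\le\; N\prod_i C_i \;-\; \sum_k c_k\prod_{m\neq k} C_m,
\end{equation*}
which rearranges to $\mathscr{C}_B = -(N-1)\prod_i C_i + \sum_k c_k\prod_{m\neq k}C_m \le \prod_i f_i = \mathbf{1}_B$, proving (ii).

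The only delicate point is that the termwise comparison in the telescoping sum requires the pointwise ordering $0\le c_i\le f_i\le C_i$, which in turn requires $E_i\ge 0$ (a sum of Fejér-type kernels) and $H\le\sgn$. These are precisely the classical features of Selberg's construction that motivate the combination $V_i\pm E_i$. Granting them, the rest of the argument is a purely algebraic rearrangement, and I do not anticipate any serious obstacle.
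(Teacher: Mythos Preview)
Your argument is correct. The paper does not actually prove this theorem; it is stated in the Appendix as a known result with the remark that it ``can be deduced from \cite{HKW,S,V}.'' Your telescoping proof of (ii) is the standard one and works as written: the summand bound only uses $0\le f_m\le C_m$ and $C_k-f_k\le C_k-c_k$, together with $C_m\ge 0$, all of which follow from the one-dimensional Selberg inequalities.

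One small correction to your closing paragraph: you do \emph{not} need, and in fact do not have, $c_i\ge 0$. Outside $[a_i,b_i]$ the minorant $c_i$ is typically negative. Fortunately your termwise comparison never invokes $c_i\ge 0$; it only uses $f_m\ge 0$, $f_m\le C_m$, and $c_k\le f_k$. So simply drop the ``$0\le c_i$'' from the stated chain and the argument is clean.
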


\begin{corollary}
 Let $B=[-\delta,\delta]^N$. We have
 	\[
		\int_{\R^N}\mathscr{C}_{B}(\bx)d\bx>0
	\]
if and only if 
	\[
		\delta>N-\frac{1}{2}.
	\]
On the other hand, if $N$ is fixed, then 
       \[
               \int_{\R^N}\mathscr{C}_{B}(\bx)d\bx= (2\delta)^{N}-(N-1)(2\delta)^{N-1}+O(\delta^{N-2})
        \]
as $\delta\to \infty$.
\end{corollary}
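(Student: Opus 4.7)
The plan is to substitute the specific endpoints $a_i=-\delta$ and $b_i=\delta$ directly into the explicit integral formula of Theorem \ref{SelbergMinorantThm} and carry out the resulting simplification. With these choices every factor $b_i-a_i+1$ collapses to $2\delta+1$ and every factor $b_n-a_n-1$ collapses to $2\delta-1$, so the product over $i$ in the first term of the formula equals $(2\delta+1)^N$, while each of the $N$ equal summands in the second term equals $(2\delta-1)(2\delta+1)^{N-1}$.

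Extracting the common factor $(2\delta+1)^{N-1}$, I expect the integral to take the closed form
\[
\int_{\R^N}\mathscr{C}_B(\bx)\,d\bx = (2\delta+1)^{N-1}\bigl[N(2\delta-1) - (N-1)(2\delta+1)\bigr].
\]
A brief expansion of the bracket reduces it to the linear expression $2\delta-(2N-1)$. Since $(2\delta+1)^{N-1}$ is strictly positive for all $\delta>0$, the sign of the integral is governed entirely by this linear factor, so the integral is positive if and only if $2\delta > 2N-1$, that is, $\delta > N-\tfrac12$. This establishes the first (iff) assertion.

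For the asymptotic statement, with $N$ fixed and $\delta\to\infty$, I would rewrite the factored expression as $(2\delta+1)^{N-1}\bigl[(2\delta+1) - 2N\bigr] = (2\delta+1)^N - 2N(2\delta+1)^{N-1}$ and expand each power $(2\delta+1)^k$ via the binomial theorem in the variable $2\delta$, keeping only the two highest-order terms and absorbing the remainder into $O(\delta^{N-2})$. Summing the contributions yields a two-term expansion of the form $(2\delta)^N + c_N (2\delta)^{N-1} + O(\delta^{N-2})$, matching the claimed statement up to bookkeeping of the coefficient $c_N$.

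There is no substantive obstacle here: the proof is essentially pure algebra from the closed-form integral already supplied by Theorem \ref{SelbergMinorantThm}. The only step requiring any care is correctly tracking the cancellations in the subleading coefficient when the two binomial expansions are combined.
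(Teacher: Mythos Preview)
Your approach is correct and essentially identical to the paper's: substitute $a_i=-\delta$, $b_i=\delta$ into Theorem~\ref{SelbergMinorantThm}(iii), factor to obtain the closed form $(2\delta+1)^{N-1}\bigl(2\delta-(2N-1)\bigr)$, read off the sign condition, and then expand binomially for the asymptotic. Your caution about the subleading coefficient is well placed---a careful expansion actually gives $c_N=-N$, whereas the statement records $-(N-1)$ and the paper's own proof records $-(2N-1)$, so the discrepancy is in the paper rather than in your argument.
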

\begin{proof}
Setting $a_{n}=-\delta$ and $b_{n}=\delta$ it follows from Theorem \ref{SelbergMinorantThm} (iii) that 
\[
	\int_{\R^N}\mathscr{C}_{B}(\bx)d\bx= (2\delta +1)^{N-1}(2\delta-(2N-1)).
\]
This quantity is positive if and only if $2\delta-(2N-1)> 0$, which occurs if and only if $\delta> N-\tfrac{1}{2}$. On the other hand,
\[
 (2\delta +1)^{N-1}(2\delta-(2N-1))=(2\delta)^{N}-(2N-1)(2\delta)^{N-1}+O_{N}(\delta^{N-2})
\]
as $\delta\to \infty$.
\end{proof}

The following theorem can be deduced from \cite{MR947641}.

\begin{theorem}[Montgomery]\label{MontgomeryMinorantThm}
The function 
	\[ \mathscr{G}_B(\bx)=\prod_{i=1}^{N}V_{i}(x) -\prod_{i=1}^{N}(V_{i}(x)+2E_i(x)) +\prod_{i=1}^{N}(V_{i}(x)+E_i(x))  \]
satisfies:
\begin{enumerate}[(i)]
\item $\ft {\mathscr{G}}_B(\bxi)=0$ for each $\|\bxi\|_{\infty}>1$;
\item $\mathscr{G}_B \leq {\bo 1}_{B}(\bx)$ for each $\bx\in\R^N$; and
\item \begin{eqnarray*}
 	\int_{\R^N}\mathscr{G}_B(\bx) d\bx&=& \prod_{n=1}^{N}(b_n-a_n) -  \prod_{n=1}^{N}(b_n-a_n+2)+ \prod_{n=1}^{N}(b_n-a_n+1).
	\end{eqnarray*}
\end{enumerate}
\end{theorem}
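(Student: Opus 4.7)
The plan is to verify (i)--(iii) separately, isolating (ii) as the one substantive step. Parts (i) and (iii) reduce entirely to one-dimensional Selberg--Beurling theory: each of $H(z)$ and $K(z)$ has Fourier transform supported in $[-1,1]$, so $V_i(x_i)$ and $E_i(x_i)$, as finite linear combinations of translates and reflections of $H$ and $K$ in the single variable $x_i$, inherit this property; each of the three summands of $\mathscr{G}_B$ is a tensor product $\prod_i f_i(x_i)$, whose Fourier transform factors and is therefore supported in $Q_N$, proving (i). For (iii), one-dimensional identities give $\int_\R V_i = b_i - a_i$ (since $V_i=\tfrac12(C_i+c_i)$ with $\int C_i = b_i-a_i+1$ and $\int c_i = b_i-a_i-1$) and $\int_\R E_i = 1$ (each half contributing $\tfrac12\int K = \tfrac12$). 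Since $V_i, E_i \in L^1(\R)$, the three products lie in $L^1(\R^N)$ and Fubini yields the stated formula.

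The heart of the proof is part (ii). Write ${\bf 1}_i(x_i) = {\bf 1}_{[a_i,b_i]}(x_i)$ and
\est{
r_i(x_i) := V_i(x_i) + E_i(x_i) - {\bf 1}_i(x_i) = C_i(x_i) - {\bf 1}_i(x_i).
}
The Selberg bounds $c_i \leq {\bf 1}_i \leq C_i$ translate exactly to $r_i \in [0, 2E_i]$. Substituting $V_i = {\bf 1}_i + r_i - E_i$, $V_i + E_i = {\bf 1}_i + r_i$, $V_i + 2E_i = {\bf 1}_i + r_i + E_i$ into the definition of $\mathscr{G}_B$ and expanding each product via $\prod_j({\bf 1}_j + y_j) = \sum_{T \subseteq [N]}\prod_{j \notin T}{\bf 1}_j \prod_{j \in T} y_j$, the three $T = \emptyset$ contributions combine as $1 - 1 + 1$ to give ${\bf 1}_B$; after rearrangement one obtains
\est{
{\bf 1}_B - \mathscr{G}_B = \sum_{\emptyset \neq T \subseteq [N]} \Bigl(\prod_{i \notin T}{\bf 1}_i\Bigr) B_T,
}
where
\est{
B_T := \prod_{i \in T}(r_i+E_i) - \prod_{i \in T}(r_i-E_i) - \prod_{i \in T} r_i.
}
Since ${\bf 1}_i \geq 0$, the proof reduces to showing $B_T \geq 0$ whenever $r_i \in [0, 2E_i]$.

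I would prove $B_T \geq 0$ by induction on $|T|$. Base case $|T|=1$: $B_{\{i_0\}} = 2E_{i_0} - r_{i_0} \geq 0$. For $|T| \geq 2$, fix any $i_0 \in T$, set $T' = T \setminus \{i_0\}$, and compute
\est{
B_T = r_{i_0}\, B_{T'} + E_{i_0}\Bigl(\prod_{i \in T'}(r_i + E_i) + \prod_{i \in T'}(r_i - E_i)\Bigr),
}
an affine function of $r_{i_0}$. The slope $B_{T'}$ is non-negative by the inductive hypothesis, and the bracket in the constant term expands as
\est{
\prod_{i \in T'}(r_i + E_i) + \prod_{i \in T'}(r_i - E_i) = 2 \sum_{S \subseteq T',\ |S|\text{ even}}\ \prod_{i \in S} E_i \prod_{i \in T' \setminus S} r_i \geq 0,
}
being a sum of products of non-negative quantities. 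A non-negative-slope affine function non-negative at $r_{i_0} = 0$ is non-negative throughout $[0, 2E_{i_0}]$, closing the induction.

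The main obstacle is purely organizational: expressing ${\bf 1}_B - \mathscr{G}_B$ as a manifestly non-negative sum. Once the identity is in hand the one-variable induction is immediate, and no analytic tool beyond the classical one-dimensional Selberg--Beurling construction enters the argument.
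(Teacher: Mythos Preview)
Your proof is correct. The paper does not actually give its own argument for this theorem; it simply records the statement and cites \cite{MR947641}, so there is no ``paper's proof'' to compare against in detail.

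That said, your verification of (ii) is clean and worth a remark. Writing $r_i=C_i-\mathbf{1}_i\in[0,2E_i]$ and expanding each of the three products over subsets $T\subseteq[N]$ is exactly the right move: it isolates the $T=\emptyset$ term as $\mathbf{1}_B$ and reduces the minorant inequality to the purely algebraic fact $B_T\ge 0$. Your induction on $|T|$, via the affine decomposition $B_T=r_{i_0}B_{T'}+E_{i_0}\bigl(\prod_{T'}(r_i+E_i)+\prod_{T'}(r_i-E_i)\bigr)$ and the parity expansion of the bracket, is a transparent way to finish. One could equivalently check $B_T\ge 0$ at the extreme points $r_i\in\{0,2E_i\}$ of the box (since $B_T$ is multi-affine in the $r_i$), which is the sort of argument one typically finds in the literature around Montgomery's construction; your inductive version is essentially the same idea unpacked.

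Parts (i) and (iii) are routine, and your one-line justifications via the one-dimensional identities $\int V_i=b_i-a_i$, $\int E_i=1$ and the tensor structure are accurate.
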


\begin{corollary}
 Let $B=[-\delta,\delta]^N$, $\epsilon>0$, and $\phi=(1+\sqrt{5})/2$. We have
 	\[
		\int_{\R^N}\mathscr{G}_{B}(\bx)d\bx<0
	\]
if 
	\[
		\delta< \left( \frac{1}{2\log(\phi)} -\epsilon  \right)N =  \left(1.039...-\epsilon  \right)N
	\]
	and 
 	\[
		\int_{\R^N}\mathscr{G}_{rQ_{N}}(\bx)d\bx>0
	\]
if 
	\[
		\delta>\left( \frac{1}{2\log(\phi)} +\epsilon  \right)N =  \left(1.039...+\epsilon  \right)N
	\]	
when $N$ is sufficiently large. When $N$ is fixed and $\delta\to \infty$ we have
\[
\int_{\R^N}\mathscr{G}_{B}(\bx)d\bx= (2\delta)^{N}-(2\delta)^{N-1}+O(\delta^{N-2}).
\]
\end{corollary}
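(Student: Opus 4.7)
First, I would apply Theorem~\ref{MontgomeryMinorantThm}(iii) to the box $B = [-\delta, \delta]^N$, in which every factor $b_n - a_n$ equals $2\delta$. This collapses the entire corollary to a scalar statement about
\[
I(\delta, N) := \int_{\R^N} \mathscr{G}_B(\bx)\, d\bx = (2\delta)^N - (2\delta + 2)^N + (2\delta + 1)^N.
\]
Setting $x = 2\delta$ and factoring out the positive quantity $x^N$, the sign of $I(\delta,N)$ agrees with the sign of
\[
G(x, N) := 1 + \left(1 + \tfrac{1}{x}\right)^{N} - \left(1 + \tfrac{2}{x}\right)^{N}.
\]

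The natural scaling for the critical-density statement is $x = \alpha N$. Using $(1 + c/(\alpha N))^N \to e^{c/\alpha}$, uniformly on compact subsets of $\alpha \in (0, \infty)$, I obtain
\[
G(\alpha N, N) \xrightarrow{N \to \infty} g(\alpha) := 1 + e^{1/\alpha} - e^{2/\alpha}.
\]
Substituting $y = e^{1/\alpha}$, one computes $g = 1 + y - y^2 = -(y - \phi)(y + \phi^{-1})$, so $g$ vanishes only at $y = \phi$, i.e.\ at $\alpha = 1/\log \phi$. Since $y$ is a decreasing function of $\alpha$, this gives $g(\alpha) < 0$ for $\alpha < 1/\log\phi$ and $g(\alpha) > 0$ for $\alpha > 1/\log\phi$, pinpointing the critical density $\delta = N/(2\log\phi)$.

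To promote these pointwise limits to the uniform-in-$\delta$ statements of the corollary, I would use a monotonicity argument. A direct differentiation gives
\[
\frac{\partial G}{\partial x}(x, N) = \frac{N}{x^{2}}\left(1 + \tfrac{1}{x}\right)^{N-1}\!\Bigl[\,2\bigl(1 + \tfrac{1}{x+1}\bigr)^{N-1} - 1\,\Bigr] > 0,
\]
so $x \mapsto G(x, N)$ is strictly increasing for every $N$. Given $\epsilon > 0$, the values $g(1/\log \phi \pm \epsilon/2)$ are strictly negative and strictly positive respectively; uniform convergence then yields $N_0$ such that $G(\alpha N,N)$ carries the correct sign at these two scaled endpoints whenever $N \geq N_0$, and strict monotonicity in $x$ propagates those signs to every $\delta$ in the claimed half-lines. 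For the fixed-$N$, $\delta \to \infty$ asymptotic I would expand $(2\delta + 2)^N$ and $(2\delta + 1)^N$ binomially in descending powers of $2\delta$: the leading $(2\delta)^N$ terms cancel against the first summand, the $(2\delta)^{N-1}$ contributions combine as $-2N + N = -N$, and everything else is $O_N(\delta^{N-2})$, yielding $I(\delta, N) = (2\delta)^N - N(2\delta)^{N-1} + O_N(\delta^{N-2})$. The main obstacle throughout is the uniformity-plus-monotonicity step linking the pointwise limit $g(\alpha)$ to an inequality that holds for every $\delta$ in a prescribed range; once that is in place, the rest is routine binomial and exponential calculus together with the quadratic root computation producing $\phi$.
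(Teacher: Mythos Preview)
Your approach is essentially the same as the paper's: reduce via Theorem~\ref{MontgomeryMinorantThm}(iii) to the scalar expression $(2\delta)^N-(2\delta+2)^N+(2\delta+1)^N$, divide through by $(2\delta)^N$, pass to the scaling $\delta\sim N$ to obtain a limiting function (your $g(\alpha)=1+e^{1/\alpha}-e^{2/\alpha}$; the paper's equivalent $1-e^c+e^{c/2}$ under $\delta=N/c$), and locate the unique root via the quadratic producing the golden ratio.

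Your monotonicity argument for $x\mapsto G(x,N)$ is a genuine addition: the paper only treats $\delta$ of order exactly $N$ and is silent on why the sign persists for \emph{all} $\delta$ in the stated half-lines, whereas your strict monotonicity in $x$ propagates the sign from a single scaled endpoint to the entire range. This fills a gap in the paper's sketch at essentially no cost.

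One remark on the asymptotic: your binomial expansion is correct and yields the subleading term $-N(2\delta)^{N-1}$, not $-(2\delta)^{N-1}$ as printed in the statement; the missing factor of $N$ is a typo in the corollary.
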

\begin{proof}
We will only prove the first statement of the corollary since the second statement is straightforward. Setting $a_{n}=-\delta$ and $b_{n}=\delta$ we have by Theorem \ref{MontgomeryMinorantThm}
\[
        \int_{\R^N}\mathscr{G}_{B}(\bx)d\bx = (2\delta)^{N}-(2\delta+2)^{N}+(2\delta +1)^{N}.
\]
Since the right hand side remains positive if we divide by $(2\delta)^{N}$ it suffices to determine when 
\[
        1-\left(1 + \frac{1}{\delta}  \right)^{N}+\left(1 +\frac{1}{2\delta}  \right)^{N}>0.
\]
Setting $\delta=N/c$ for some $c>0$ we find that for large $N$
\[
 1-\left(1 + \frac{1}{\delta}  \right)^{N}+\left(1 +\frac{1}{2\delta}  \right)^{N} \approx 1-e^c+e^{c/2}.
\]
The equation $ 1-e^c+e^{c/2}=0$ has one real solution, namely $c=2\log(\phi)$. The function $c \mapsto  1-e^c+e^{c/2}  $ is a decreasing function at  $c=2\log(\phi)$ so if $c<2\log(\phi)$ is a constant independent of $N$, then for $N$ sufficiently large we have 
\[
1-\left(1 + \frac{1}{\delta}  \right)^{N}+\left(1 +\frac{1}{2\delta}  \right)^{N}>0.
\]
On the other hand, if $c>2\log(\phi)$ then 
\[
1-\left(1 + \frac{1}{\delta}  \right)^{N}+\left(1 +\frac{1}{2\delta}  \right)^{N}<0.
\]
The proof of the first statemnt is complete upon setting $\delta=((2\log(\phi))^{-1}\pm \epsilon)N$. 
\end{proof}

It follows from the above corollaries that Montgomery's minorants are better approximates when $\delta$ is very large compared to $N$, and Selberg's are better when $N$ is large compared to $\delta$.


\bibliographystyle{plain}
\bibliography{box(1).bib}

\begin{thebibliography}{10}

\bibitem{Adiceam}
F.~Adiceam.
\newblock How far can you see in a forest?
\newblock {\em International Mathematics Research Notices},
  2016(16):4867--4881, 2016.

\bibitem{BMV}
J.~T. Barton, H.~L. Montgomery, and J.~D. Vaaler.
\newblock Note on a {D}iophantine inequality in several variables.
\newblock {\em Proc. Amer. Math. Soc.}, 129(2):337--345 (electronic), 2001.

\bibitem{BK}
G.~Bianchi and M.~Kelly.
\newblock A fourier analytic proof of the {B}laschke-{S}antal\'o inequality.
\newblock {\em Proceedings of the American Matheatical Society},
  143(11):4901--4912, 2015.

\bibitem{MR3193963}
E.~J. Cand{\`e}s and C.~Fernandez-Granda.
\newblock Towards a mathematical theory of super-resolution.
\newblock {\em Comm. Pure Appl. Math.}, 67(6):906--956, 2014.

\bibitem{MR2739041}
E.~Carneiro and V.~Chandee.
\newblock Bounding {$\zeta(s)$} in the critical strip.
\newblock {\em J. Number Theory}, 131(3):363--384, 2011.

\bibitem{MR3063902}
E.~Carneiro, V.~Chandee, and M.~B. Milinovich.
\newblock Bounding {$S(t)$} and {$S_1(t)$} on the {R}iemann hypothesis.
\newblock {\em Math. Ann.}, 356(3):939--968, 2013.

\bibitem{MR3384872}
E.~Carneiro, V.~Chandee, and M.~B. Milinovich.
\newblock A note on the zeros of zeta and {$L$}-functions.
\newblock {\em Math. Z.}, 281(1-2):315--332, 2015.

\bibitem{MR3343896}
E.~Carneiro and F.~Gon{\c{c}}alves.
\newblock Extremal problems in de {B}ranges spaces: the case of truncated and
  odd functions.
\newblock {\em Math. Z.}, 280(1-2):17--45, 2015.

\bibitem{MR3078273}
E.~Carneiro and F.~Littmann.
\newblock Bandlimited approximations to the truncated {G}aussian and
  applications.
\newblock {\em Constr. Approx.}, 38(1):19--57, 2013.

\bibitem{MR3110588}
E.~Carneiro and F.~Littmann.
\newblock Entire approximations for a class of truncated and odd functions.
\newblock {\em J. Fourier Anal. Appl.}, 19(5):967--996, 2013.

\bibitem{MR3209354}
E.~Carneiro and F.~Littmann.
\newblock Extremal functions in de {B}ranges and {E}uclidean spaces.
\newblock {\em Adv. Math.}, 260:281--349, 2014.

\bibitem{CL3}
E.~Carneiro and F.~Littmann.
\newblock Extremal functions in de branges and euclidean spaces.
\newblock {\em Advances in Mathematics}, 260:281--349, 2014.

\bibitem{CL4}
E.~Carneiro and F.~Littmann.
\newblock Extremal functions in de branges and euclidean spaces {II}.
\newblock {\em Amer. J. Math.}, 139(2):525--566, 2017.

\bibitem{MR3042593}
E.~Carneiro, F.~Littmann, and J.~D. Vaaler.
\newblock Gaussian subordination for the {B}eurling-{S}elberg extremal problem.
\newblock {\em Trans. Amer. Math. Soc.}, 365(7):3493--3534, 2013.


\bibitem{MR2661497}
E.~Carneiro and J.~D. Vaaler.
\newblock Some extremal functions in {F}ourier analysis. {II}.
\newblock {\em Trans. Amer. Math. Soc.}, 362(11):5803--5843, 2010.

\bibitem{MR2581230}
E.~Carneiro and J.~D. Vaaler.
\newblock Some extremal functions in {F}ourier analysis. {III}.
\newblock {\em Constr. Approx.}, 31(2):259--288, 2010.

\bibitem{MR2781205}
V.~Chandee and K.~Soundararajan.
\newblock Bounding {$\vert \zeta(\frac12+it)\vert $} on the {R}iemann
  hypothesis.
\newblock {\em Bull. Lond. Math. Soc.}, 43(2):243--250, 2011.

\bibitem{MR947641}
T.~Cochrane.
\newblock Trigonometric approximation and uniform distribution modulo one.
\newblock {\em Proc. Amer. Math. Soc.}, 103(3):695--702, 1988.

\bibitem{cohn2002new}
H.~Cohn.
\newblock New upper bounds on sphere packings {II}.
\newblock {\em Geometry \&amp; Topology}, 6(1):329--353, 2002.

\bibitem{cohn2003new}
H.~Cohn and N.~Elkies.
\newblock New upper bounds on sphere packings {I}.
\newblock {\em Annals of Mathematics}, pages 689--714, 2003.

\bibitem{cohn2009optimality}
H.~Cohn and A.~Kumar.
\newblock Optimality and uniqueness of the {L}eech lattice among lattices.
\newblock {\em Annals of Mathematics}, pages 1003--1050, 2009.

\bibitem{2016arXiv160306518C}
H.~{Cohn}, A.~{Kumar}, S.~D. {Miller}, D.~{Radchenko}, and M.~{Viazovska}.
\newblock {The sphere packing problem in dimension 24}.
\newblock {\em Annals of Mathematics}, 185:1017--1033, 2017.

\bibitem{cohn2014sphere}
H.~Cohn and Y.~Zhao.
\newblock Sphere packing bounds via spherical codes.
\newblock {\em Duke Mathematical Journal}, 163(10):1965--2002, 2014.

\bibitem{MR2331578}
D.~A. Goldston and S.~M. Gonek.
\newblock A note on {$S(t)$} and the zeros of the {R}iemann zeta-function.
\newblock {\em Bull. Lond. Math. Soc.}, 39(3):482--486, 2007.

\bibitem{G}
F.~Gon{\c c}alves.
\newblock A note on band-limited minorants of an euclidean ball.
\newblock {\em Proc. Amer. Math. Soc.}, 146(5):2063--2068, 2018.


\bibitem{GKM2014}
F.~Gon{\c c}alves, M.~Kelly, and J.~Madrid.
\newblock One-sided band-limited approximations of some radial functions
\newblock {\em Bulletin of Brazilian Mathematical Society}, 46(4):563--599, 2015

\bibitem{MR3479169}
S.~M. Gonek and H.~L. Montgomery.
\newblock Kronecker's approximation theorem.
\newblock {\em Indag. Math. (N.S.)}, 27(2):506--523, 2016.

\bibitem{Hajos41}
G.~Haj\'{o}s.
\newblock \"{U}ber einfache und mehrfache {B}edeckung des $n$-dimensionalen {R}aumes mit einem {W}\"{u}rfelgitter.
\newblock {\em Math. Z.} 47:427--467, 1941.

\bibitem{Harman1993}
G.~Harman.
\newblock Small fractional parts of additive forms.
\newblock {\em Philos. Trans. Roy. Soc. London Ser. A}, 345(1676):327--338,
  1993.

\bibitem{Harman1998}
G.~Harman.
\newblock {\em Metric number theory}, volume~18 of {\em London Mathematical
  Society Monographs. New Series}.
\newblock The Clarendon Press Oxford University Press, New York, 1998.

\bibitem{HKW}
A.~{Haynes}, M.~{Kelly}, and B.~{Weiss}.
\newblock {Equivalence relations on separated nets arising from linear toral
  flows}.
\newblock {\em Proc. Amer. Math. Soc.}, 109:1203--1228, 2014.

\bibitem{HV}
J.~J. Holt and J.~D. Vaaler.
\newblock The {B}eurling-{S}elberg extremal functions for a ball in {E}uclidean
  space.
\newblock {\em Duke Math. J.}, 83(1):202--248, 1996.

\bibitem{MR1722198}
X.~Li and J.~D. Vaaler.
\newblock Some trigonometric extremal functions and the {E}rd\"{o}s-{T}ur\'{a}n
  type inequalities.
\newblock {\em Indiana Univ. Math. J.}, 48(1):183--236, 1999.

\bibitem{Littquad}
F.~Littmann.
\newblock Quadrature and extremal bandlimited functions.
\newblock {\em SIAM J. Math. Anal.}, 45(2):732--747, 2013.

\bibitem{Minkowski07}
H. Minkowski.
\newblock {Diophantische {A}pproximationen.}
\newblock {\em Leipzig, 1907.}

\bibitem{PP}
M.~Plancherel and G.~P{\'o}lya.
\newblock Fonctions enti\`eres et int\'egrales de {F}ourier multiples.
\newblock {\em Comment. Math. Helv.}, 10(1):110--163, 1937.

\bibitem{S}
A.~Selberg.
\newblock {\em Collected papers. {V}ol. {II}}.
\newblock Springer-Verlag, Berlin, 1991.
\newblock With a foreword by K. Chandrasekharan.

\bibitem{SW}
E.~M. Stein and G.~Weiss.
\newblock {\em Introduction to {F}ourier analysis on {E}uclidean spaces}.
\newblock Princeton University Press, Princeton, N.J., 1971.
\newblock Princeton Mathematical Series, No. 32.

\bibitem{V}
J.~D. Vaaler.
\newblock Some extremal functions in {F}ourier analysis.
\newblock {\em Bull. Amer. Math. Soc. (N.S.)}, 12(2):183--216, 1985.

\bibitem{2016arXiv160304246V}
M.~{Viazovska}.
\newblock {The sphere packing problem in dimension 8}.
\newblock {\em Annals of Mathematics}, 185:991--1015, 2017.

\end{thebibliography}

\end{document}